\def\rr{{\mathbb R}}
\def\rn{{{\rr}^n}}
\def\cn{{\mathbb N}}
\def\fz{\infty}
\def\az{\alpha}
\def\supp{{\mathop\mathrm{\,supp\,}}}
\def\loc{{\mathop\mathrm{\,loc\,}}}
\def\lip{{\mathop\mathrm{\,Lip\,}}}
\def\lz{\lambda}
\def\dz{\delta}
\def\ez{\epsilon}
\def\bz{\beta}
\def\gz{{\gamma}}
\def\pa{\partial}
\def\wz{\widetilde}
\def\hs{\hspace{0.3cm}}
\def\com{\complement}
\def\r{\right}
\def\lf{\left}
\newtheorem{thm}{Theorem}[section]
\newtheorem{lem}{Lemma}[section]
\newtheorem{prop}{Proposition}[section]
\numberwithin{equation}{section}
\begin{document}
\arraycolsep=1pt
\title{\Large\bf Lipschitz continuity of solutions of Poisson
equations\\
in metric measure spaces\footnotetext{\hspace{-0.35cm} 2000 {\it
Mathematics Subject Classification}. 31C25; 31B05; 35B05; 35B45
\endgraf{\it Key words and phrases. {\rm Lipschitz regularity; Poincar\'e inequality;
Newtonian space; heat kernel; Poisson equation}}
\endgraf Renjin Jiang was partially supported by the Academy of Finland grants 120972 and
131477.}}

\author{Renjin Jiang}
\date{ }
\maketitle
\begin{center}
\begin{minipage}{13.5cm}\small
{\noindent{\bf Abstract.} Let $(X,d)$ be a pathwise connected metric space equipped with
an Ahlfors $Q$-regular measure $\mu$, $Q\in[1,\infty)$. Suppose that $(X,d,\mu)$ supports a
$2$-Poincar\'e inequality and a Sobolev-Poincar\'e type inequality
for the corresponding ``Gaussian measure". The author uses the heat equation to study the
Lipschitz regularity of solutions of the Poisson equation $\Delta u=f$, where $f\in L^p_\loc$.
When $p>Q$, the local Lipschitz continuity of $u$ is established. }
\end{minipage}
\end{center}
\vspace{0.0cm}

\section{Introduction}
\hskip\parindent Let $(X,d)$ be a pathwise connected, proper metric
measure space, where proper means: each closed ball in $X$ is
compact. Given a domain $\Omega \subseteq X$
and $u$ a measurable function on $\Omega$, a non-negative Borel
function $g$ is called an upper gradient of $u$ on $\Omega$, if
\begin{equation*}
 |u(x)-u(y)|\le \int_{\gz} g\,ds
\end{equation*}
for all $x,\,y\in\Omega$ and each rectifiable curve $\gz:\,[0,l]\to
\Omega$ that joins $x$ and $y$. Further, a metric measure space $(X,d,\mu)$ is
said to support a (weak) $p$-Poincar\'e inequality, if there exist
$C_P>0$ and $\lz\ge1$ such that for every ball $B(x,r)\subseteq X$ and
for each continuous function $u$ and every upper gradient $g$ of $u$
on $B(x,\lz r)$,
\begin{equation*}
\fint_{B(x,r)}|u(y)-u_B|\,d\mu(y)\le
C_Pr\lf(\fint_{B(x,\lz
r)}g(y)^p\,d\mu(y)\r)^{1/p},
\end{equation*}
where and in what follows, for each ball $B\subset X$, $u_B=\fint_B u\,d\mu=\mu(B)^{-1}\int_{B}u\,d\mu$;
see \cite{hek} for details.

By using the upper gradient, Shanmugalingam \cite{sh} introduced
the first-order Sobolev spaces on $X$, i.e., the Newtonian (Sobolev) space
$N^{1,p}(X,\mu)$.  For convenience, we denote the local
Newtonian spaces and Newtonian spaces with zero boundary values
by $N_{\loc}^{1,p}$ and $N_{0}^{1,p}$, respectively (see Section 2 for
details). We note that it was proved in \cite{sh} that the Newtonian (Sobolev) spaces $N^{1,p}(X,\mu)$
coincide with the Sobolev spaces introduced by Cheeger \cite{ch}
for $p>1$. From \cite{ch}, for each $u\in N^{1,p}(X,\mu)$,
we can assign a differential $Du$, which is called Cheeger derivative of $u$ following
\cite{krs}; see Subsection 2.1 below. Notice that for Lipschitz functions $u$,
the inner product $Du\cdot Du$ is comparable to the square of $\lip u$, where
\begin{equation*}
\lip u(x)=\limsup_{r\to 0}\sup_{d(x,y)\le r}\frac{|u(x)-u(y)|}{r}.
\end{equation*}

Having the above tools, the Lipschitz regularity of harmonic
functions in $X$ is then considered in \cite{krs}.
Let us first recall some
notions. Let $\mu$ be a $Q$-regular measure on $X$ for some $Q\ge 1$,
i.e., $\mu$ is Borel-regular and there exist constants $Q\ge 1$ and
$C_Q\ge 1$ such that for every $x\in X$ and all $r>0$,
$$C_Q^{-1}r^Q\le \mu(B(x,r))\le C_Qr^Q.$$
Let $\Omega\subseteq X$ be a domain. A function $u\in N_{\loc}^{1,2}(X)$ is called
Cheeger-harmonic in $\Omega$, if for all Lipschitz functions $\phi$
with compact support in $\Omega$,
\begin{equation*}
  \int_\Omega Du(x)\cdot D\phi(x)\,d\mu(x)=0.
\end{equation*}

The following theorem was established in \cite{krs}.
\begin{thm}
Let  $Q > 1$ and suppose that $(X,d,\mu)$ supports a
2-Poincar\'e inequality. Furthermore, assume that there
exist constants $C > 0$ and $t_0>0$ such that for each $0<t<t_0$ and
every $g\in N^{1,2}(X)$,
\begin{eqnarray}\label{1.1}
\int_X g(y)^2 p(t,x,y)\,d\mu(y)&&\le (2t+Ct^2)\int_X |Dg(y)|^2 p(t,x,y)\,d\mu(y)\nonumber\\
&&\hs+\lf(\int_X g(y)p(t,x,y)\,d\mu(y)\r)^2
\end{eqnarray}
for almost every $x\in X$. If $u$ is Cheeger-harmonic in $\Omega$, where $\Omega\subset X$
is a domain, then $u$ is locally Lipschitz continuous in $\Omega$.
\end{thm}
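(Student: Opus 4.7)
The plan is to combine three ingredients: (i) the heat-kernel inequality (1.1), read as a Bakry--Émery type ``pseudo-Gaussian" bound; (ii) two-sided Gaussian estimates for the heat kernel $p(t,x,y)$, which are available here because the Ahlfors $Q$-regularity together with the $2$-Poincar\'e inequality put us in the Sturm/Saloff-Coste framework; and (iii) an $L^\infty$ bound for $|Du|$ coming from a Moser-type iteration. The end is to verify a Campanato-type estimate of the form $\fint_{B(x,r)}|u-u(x)|^2\,d\mu\le C r^2$ uniformly on a relatively compact subdomain, which is equivalent to local Lipschitz continuity of $u$.

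I will first localize. Fix a ball $B_0\subset\subset\Omega$ and a concentric ball $B'$ with $2B'\subset B_0$. Using Cheeger's differential structure and the definition of Cheeger-harmonicity, I show that the associated heat semigroup $P_t$ fixes $u$ on $B'$ for $t$ smaller than some $t_1(B',B_0)$, in the sense that $P_tu(x)=u(x)$ for a.e. $x\in B'$. With this in hand, apply hypothesis (1.1) with $g=u$: since $P_tu(x)=u(x)$,
\begin{equation*}
\int_X (u(y)-u(x))^2\,p(t,x,y)\,d\mu(y)\;=\;P_t(u^2)(x)-u(x)^2\;\le\;(2t+Ct^2)\,P_t(|Du|^2)(x),
\end{equation*}
valid for all such $x$ and all $t\in(0,t_1)$. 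Now I invoke the Gaussian lower bound $p(t,x,y)\gtrsim t^{-Q/2}$ for $d(x,y)\le\sqrt t$ (a consequence of the $Q$-regularity and the $2$-Poincar\'e inequality) to derive
\begin{equation*}
\fint_{B(x,\sqrt t)}(u(y)-u(x))^2\,d\mu(y)\;\lesssim\;t\,P_t(|Du|^2)(x),
\end{equation*}
and the Gaussian upper bound, together with summation over dyadic annuli and $Q$-regularity, to bound the right-hand side by $t$ times the Hardy--Littlewood maximal function of $|Du|^2$ at $x$.

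The key remaining task is to bound $M(|Du|^2)(x)$ uniformly on $B'$. For this I will use the fact that (1.1) is a semigroup form of a Bakry--Émery $\Gamma_2\ge K\Gamma$ inequality with $K$ determined by $C$, and that for a Cheeger-harmonic $u$ the function $|Du|^2$ is a weak subsolution of a suitable heat-type equation. By the standard Moser/De Giorgi iteration, adapted to the metric setting using the $2$-Poincar\'e inequality and $Q$-regularity, one obtains
\begin{equation*}
\sup_{x\in B'}|Du(x)|^2\;\le\;C\,\fint_{B_0}|Du|^2\,d\mu,
\end{equation*}
which immediately controls $M(|Du|^2)$ on $B'$. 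Combining this with the preceding oscillation estimate yields $\fint_{B(x,r)}(u-u(x))^2\,d\mu\le C r^2$ uniformly for $x\in B'$ and $r<r_0$, and Campanato's characterization of Lipschitz functions on spaces of homogeneous type finishes the proof.

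The step I expect to be most delicate is extracting the Bochner/subsolution property for $|Du|^2$ from the semigroup inequality (1.1) without a classical chain rule: in the Cheeger setting one does not have a pointwise $\Gamma_2$ calculus, so the reverse-Poincar\'e / Moser iteration must be carried out via the heat semigroup, differentiating identities like $\Phi(s)=P_s((P_{t-s}u)^2)$ and estimating $\Phi'(s)$ using (1.1) directly, rather than via a pointwise Bochner formula. Once this iteration is carried through, the remaining arguments are the routine heat-kernel and Poincar\'e-inequality manipulations outlined above.
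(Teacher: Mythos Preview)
Your localization step contains a genuine error: it is not true that the global heat semigroup $P_t$ fixes a Cheeger-harmonic function on a subdomain, no matter how small $t$ is. The heat kernel $p(t,x,y)$ has Gaussian (not compactly supported) tails, so $P_tu(x)$ depends on the values of $u$ on all of $X$, and there is no finite-speed-of-propagation threshold $t_1(B',B_0)$ below which only values in $B_0$ matter. The identity $P_tu=u$ would follow from $Au=0$ \emph{globally}, but local Cheeger-harmonicity gives nothing of the sort; indeed $u\in N^{1,2}_{\loc}(X)$ need not lie in $L^2(X)$, so $P_tu$ may not even be defined. Consequently the key variance identity $P_t(u^2)(x)-u(x)^2=\int(u(y)-u(x))^2p(t,x,y)\,d\mu(y)$ that you feed into (1.1) is unavailable, and everything downstream collapses.

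The approach actually taken (in \cite{krs}, whose argument the present paper adapts) confronts this head-on: one multiplies $u$ by a Lipschitz cutoff $\phi$ supported in $B(x_0,2r)$, sets $w(t,x)=u\phi(x)-T_t(u\phi)(x_0)$, and studies a carefully designed monotone-type functional
\[
J(t)=\frac{1}{t}\int_0^t\int_X |Dw(s,x)|^2\,p(s,x_0,x)\,d\mu(x)\,ds
\]
(plus the Poisson correction in the inhomogeneous case). One shows $J(T)\le C$, that $\int_0^T J'(t)\,dt\ge -C$, and that $J(t)\to|Du(x_0)|^2$ as $t\to 0^+$; the hypothesis (1.1) enters precisely in bounding $J'(t)$ from below by comparing $\int|Dw|^2p$ with $\frac{1}{2t}\int w^2 p$. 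The error terms generated by the cutoff (supported in the annulus $B(x_0,2r)\setminus B(x_0,r)$) are controlled using the off-diagonal Gaussian decay of $p$ and of $|D_yp|$. There is no Moser iteration on $|Du|^2$ and no need to extract a Bochner/subsolution property for $|Du|^2$ from (1.1); the semigroup inequality is used directly on $w=u\phi-\text{const}$, which \emph{is} in $N^{1,2}(X)$.
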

Above, $p(t,x,y)$ refers to the heat kernel associated to the Dirichlet
form $\int_X Df(x)\cdot Dg(x)\,d\mu(x)$; see Subsection 2.2 below.

It is well known that \eqref{1.1} can be deduced from the logarithmic Sobolev inequality
\begin{eqnarray}\label{1.2}
&&\int_X f(x)^2\log\lf(\frac{f(x)^2}{\|f\|^2_{L^2({X,p(t,x_0,x)\,d\mu})}}\r)p(t,x_0,x)\,d\mu(x)\nonumber\\
&&\hs\hs\hs\le (4t+2Ct^2)\int_X|\nabla f(x)|^2p(t,x_0,x)\,d\mu(x);
\end{eqnarray}
see, for example, \cite{bak1}.

We remark that the authors in \cite{krs} gave several examples
to show that: (i) in the abstract settings,
harmonic functions may not be smooth and local Lipschitz continuity may be
the best possible regularity; (ii) doubling of $\mu$ is not enough
to guarantee the local Lipschitz continuity of harmonic functions and it
is natural to consider an Ahlfors $Q$-regular measure;
(iii) even when the Poincar\'e inequality and Ahlfors $Q$-regularity hold,
harmonic functions may still not be locally Lipschitz continuous; hence
a Sobolev-Poincar\'e inequality \eqref{1.1} is needed.

Inspired by \cite{krs}, in this paper, we work on the Lipschitz
regularity of solutions of the Poisson equations in metric spaces.
Let $\Omega\subseteq X$ be a domain.
A Sobolev function $u\in N^{1,2}_{\loc}(X)$ is called a solution of
the equation $\Delta u=f$ in $\Omega$, if
\begin{equation}\label{1.3}
-\int_\Omega Du(x)\cdot D\phi(x)\,d\mu(x)=\int_\Omega f(x)\phi(x)\,d\mu(x),
\ \ \forall \phi\in N_0^{1,2}(\Omega).
\end{equation}

\begin{thm}\label{t1}
Let $Q \ge 1$ and suppose that $(X,d,\mu)$ supports a
2-Poincar\'e inequality and that \eqref{1.1} holds.
Let $u\in N^{1,2}_{\loc}(X)$ satisfy $\Delta u=f$ in $\Omega,$
where $\Omega\subseteq X$ is a domain and $f\in L^p(\Omega)$.
If $p>Q$, then $u$ is locally Lipschitz continuous in $\Omega$.
\end{thm}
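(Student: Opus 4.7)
The plan is to extend the heat-equation method behind Theorem~1.1 to the inhomogeneous case by treating $f$ as a perturbation via a Duhamel formula. I first localize by solving a Dirichlet problem: fix $x_0\in\Omega$ and a ball $B=B(x_0,R)$ with $\overline{B}\subset\Omega$, and write $u|_B=u_1+u_2$ where $u_1\in N^{1,2}(B)$ is Cheeger-harmonic in $B$ with $u-u_1\in N_0^{1,2}(B)$, and $u_2:=u-u_1\in N_0^{1,2}(B)$ solves $\Delta u_2=f$ in $B$ in the sense of \eqref{1.3}. Theorem~1.1 immediately yields the local Lipschitz continuity of $u_1$, so the entire task reduces to showing $u_2$ is locally Lipschitz on a concentric smaller ball.

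Next I would represent $u_2$ via the Dirichlet heat semigroup $\{P_t^B\}_{t>0}$ on $B$ associated with the restricted Dirichlet form on $N_0^{1,2}(B)$. The $2$-Poincar\'e inequality combined with Ahlfors $Q$-regularity supplies a positive spectral gap $\lambda_1>0$ for $-\Delta$ on $B$, so $P_t^B u_2\to 0$ in $L^2(B)$ as $t\to\infty$. Integrating $\partial_t P_t^B u_2=P_t^B\Delta u_2=P_t^B f$ then gives the Duhamel identity
\begin{equation*}
 u_2(x)=-\int_0^\infty P_t^B f(x)\,dt,\qquad x\in B.
\end{equation*}

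The core step is to turn this representation into a pointwise bound on $Du_2$. Under the standing hypotheses, the Dirichlet heat kernel $p^B(t,x,y)$ admits Gaussian two-sided bounds together with a gradient estimate
\begin{equation*}
 |D_x p^B(t,x,y)|\le C\,t^{-1/2}\,p^B(c t,x,y),
\end{equation*}
the metric-space analogue of Li--Yau, to be extracted from \eqref{1.1}. Differentiating under the integral and applying H\"older's inequality with $1/p+1/p'=1$ together with the on-diagonal bound $\|p^B(t,x,\cdot)\|_{L^{p'}(B)}\le C\,t^{-Q/(2p)}$ then yields
\begin{equation*}
 |Du_2(x)|\le C\,\|f\|_{L^p(B)}\int_0^\infty t^{-\frac12-\frac{Q}{2p}}\,e^{-c\lambda_1\max(t-1,0)}\,dt,
\end{equation*}
where the exponential encodes the spectral-gap decay for $t\ge 1$. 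This integral is finite near $t=0$ precisely when $\tfrac12+\tfrac{Q}{2p}<1$, i.e.\ when $p>Q$, and at infinity by exponential decay. Hence $|Du_2|\in L^\infty$ on any concentric smaller ball, so $u_2$ is locally Lipschitz; combined with Theorem~1.1 applied to $u_1$, this proves the result.

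The main obstacle is the heat-kernel gradient estimate: in smooth Riemannian settings it is classical (Li--Yau), but here it must be derived from the logarithmic Sobolev inequality \eqref{1.2} (equivalently \eqref{1.1}) via a Bakry--\'Emery $\Gamma_2$-type argument adapted to Cheeger derivatives, together with the Gaussian two-sided bounds on $p^B$ that follow from $2$-Poincar\'e and Ahlfors $Q$-regularity. A secondary delicate point is justifying the interchange of the Cheeger differential $D$ and the time integral in the Duhamel identity, which is handled by dominated convergence using precisely the gradient bound just described.
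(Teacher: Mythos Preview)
Your decomposition $u=u_1+u_2$ and the Duhamel representation are fine, but the argument collapses at the ``core step'': the pointwise Li--Yau type bound
\[
 |D_x p^B(t,x,y)|\le C\,t^{-1/2}\,p^B(ct,x,y)
\]
is precisely what is \emph{not} available in this setting, and it cannot simply be ``extracted from \eqref{1.1}''. The inequality \eqref{1.1} is an integrated Poincar\'e-type estimate for the Gaussian measure; it is strictly weaker than a pointwise gradient bound on the heat kernel. In the Bakry--\'Emery framework a lower curvature bound yields $|DT_tg|^2\le e^{-2\kappa t}T_t(|Dg|^2)$, but passing from that to a pointwise estimate on $|D_x p(t,x,y)|$ requires additional structure (smoothness of the kernel, a chain rule for $\Gamma$, iterated $\Gamma_2$-calculus) that is not known for Cheeger differentials on an abstract Ahlfors $Q$-regular space. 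Indeed, if such a gradient estimate were available here, already the harmonic case (Theorem~1.1) would admit the short potential-theoretic proof you sketch, and the $J(t)$-functional machinery of \cite{krs} would be unnecessary. The paper explicitly notes that ``many classical methods are not available'' for exactly this reason.

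The paper's proof avoids any pointwise heat-kernel gradient bound altogether. It works directly with the solution $u$ (no harmonic/potential splitting), introduces the modified functional
\[
J(t)=\frac{1+t^\ez}{t}\int_0^t\!\!\int_X\big[|Dw|^2+w\phi Au\big]\,p\,d\mu\,ds,
\]
and shows $J(T)<\infty$, $\int_0^T J'(t)\,dt>-\infty$, and $J(0^+)=|Du(x_0)|^2$. The extra factor $t^\ez$ (absent in \cite{krs}) is the new idea: it produces just enough slack to absorb the inhomogeneous term $\int w\phi f\,p\,d\mu$ via Young's inequality while still allowing \eqref{1.1} to close the estimate, with $\ez$ chosen so that $\ez+Q/p<1$ when $Q\ge 2$ (and via a more delicate localization when $1\le Q<2$). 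The only gradient information on $p$ that is used is the $L^2$-in-space-time bound of Lemma~2.4/2.5, which follows from energy estimates and does not require curvature. Your route would work on a Riemannian manifold with $\mathrm{Ric}\ge\kappa$, but not under the hypotheses of the theorem as stated.
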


Notice that in our abstract cases, we can only define the first
order derivative (Cheeger derivative), and the space $X$ does not
have any geometric structure. Thus many classical methods are not available
for Theorem 1.2. In this paper, we inherit the method investigated
in \cite{ck} and used by Koskela et al \cite{krs} to study the Lipschitz
regularity of harmonic functions in metric measure spaces.
The method involves the abstract theory of Dirichlet forms
and the heat equation. It is worth pointing out that the method used
in \cite{ck,krs} can not be directly adapted to our setting:
we need to modify the definition of a crucial functional  $J(t)$;
see \cite[1.1.5]{ck}, \cite[p.167]{krs} and \eqref{3.1} below.

The paper is organized as follows. In Section 2, we give some basic notation
and notions for Newtonian spaces, the Cheeger derivative and Dirichlet forms.
Several auxiliary results are also given. Section 3 is devoted to the proof of
Theorem 1.2. In Section 4, we discuss some situations where the Sobolev-Poincar\'e
inequality \eqref{1.1} can be verified.

Finally, we make some conventions. Throughout the paper, we denote
by $C$ a positive constant which is independent of the main
parameters, but which may vary from line to line. We also use $C_{\gz,\bz, \cdots}$
to denote a positive constant depending on the indicated parameters
$\gz,\bz,\cdots$.

\section{Preliminaries}
\hskip\parindent In this section, we give some basic notation
and notions and several auxiliary results.
\subsection{Cheeger derivative in metric spaces}
\hskip\parindent Let us first recall the definition of Newtonian spaces on $X$
following \cite{sh}. Notice that the measure $\mu$ is only required
to be doubling in \cite{sh} and Ahlfors $Q$-regular measures
are always doubling measures.

The Newtonian space $N^{1,p}(X)$ is defined to be the space of all $p$-integrable
(equivalence classes of) functions for which there exists a $p$-integrable upper
gradient. If $u\in N^{1,p}(X)$, then we define its pseudonorm by
$$\|u\|_{N^{1,p}(X)}:=\|u\|_{L^p(X)}+\inf_{g}\|g\|_{L^p(X)}.$$
where the infimum is taken over all upper gradients of $u$. Further, if
$(X,d,\mu)$ supports the $p$-Poincar\'e inequality, then it is proved in
\cite{sh} that the set of all Lipschitz functions are dense in $N^{1,p}(X)$.
It is then natural to define the Newtonian spaces on open subsets and
local Newtonian spaces $N^{1,p}_{\loc}(X)$. The Sobolev spaces with zero
boundary values on metric spaces were studied in \cite{kkm}. For a domain
$\Omega\subset X$, following \cite{kkm}, we define the Newtonian space
with zero boundary values $N^{1,p}_0(X)$ to be the space of $u\in N^{1,p}(X)$
for which $u\chi_{X\setminus \Omega}$ vanishes $p$-quasi everywhere.
Recall that a property holds $p$-quasi everywhere, if it holds except of
a set of $p$-capacity zero.

Cheeger \cite{ch} introduced Sobolev spaces in metric spaces by using
upper gradients in a different way, but it was proved in \cite{sh}
that the Sobolev spaces in \cite{ch} coincide with the corresponding
Newtonian spaces for $p > 1$. The following theorem established in \cite{ch} provides us with
a differential structure on metric spaces.

\begin{thm}\label{t2.1}
Assume that $(X,d,\mu)$ supports a weak $p$-Poincar\'e inequality for some
$p>1$ and that $\mu$ is doubling. Then there exists $N > 0$, depending only on the doubling
constant and the constants in the Poincar\'e inequality, such that the following holds.
There exists a countable collection of measurable sets $U_\az$, $\mu(U_\az)>0$
for all $\az$, and Lipschitz functions $X^\az_1,\cdots, X^\az_{k(\az)}:\, U_\az\to \rr$,
with $1\le k(\az)\le N$ such that
$\mu(X \setminus \cup_{\az=1}^\fz U_\az)=0,$
and for all $\az$ and $X^\az_1,\cdots, X^\az_{k(\az)}$ the following holds:
for $f: X\to R$ Lipschitz, there exist $V_\az(f)\subseteq U_\az$ such that
$\mu(U_\az\setminus V_\az(f))=0$, and Borel functions $b_1^\az(x,f),\cdots,b^\az_{k(\az)}(x,f)$
of class $L^\fz$ such that if $x\in V_\az$, then
$$\lip(f-a_1 X_1^\az-\cdots-a_{k(\az)}X^\az_{k(\az)})(x)=0$$
if and only if $(a_1,\cdots,a_{k(\az)})=(b^\az_1(x,f),\cdots,b^\az_{k(\az)}(x,f))$.
Moreover, for almost every $x\in U_{\az_1}\cap U_{\az_2}$, the ``coordinate functions"
$X_i^{\az_2}$ are linear combinations of the $X_i^{\az_1}$'s.
\end{thm}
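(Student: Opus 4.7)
The plan is to build a measurable ``atlas'' of finite-dimensional coordinate systems by combining a uniform dimension bound derived from the Poincar\'e inequality and doubling with a measure-theoretic maximality argument. At a point $x\in X$, call a finite collection of Lipschitz functions $X_1,\ldots,X_k$ \emph{generalized-independent at $x$} if no nontrivial linear combination $\sum_i a_i X_i$ satisfies $\lip(\sum_i a_iX_i)(x)=0$, and set $k(x):=\sup\{k:\text{such a $k$-tuple exists at $x$}\}$. The theorem reduces to proving (i)~$k(x)\le N$ at $\mu$-a.e.\ $x$ for a constant $N$ depending only on the doubling and Poincar\'e constants, and (ii)~$X$ can be covered up to a $\mu$-null set by countably many measurable $U_\az$ on each of which a fixed tuple $X_1^\az,\ldots,X_{k(\az)}^\az$ realizes $k(x)=k(\az)$ at a.e.\ $x\in U_\az$.

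Step~(i) is the hard analytic step and is the main obstacle I foresee. I would argue by contradiction at a density point $x_0$ where $k(x_0)>N$: there would exist Lipschitz functions $X_1,\ldots,X_{N+1}$ with $\lip X_i(x_0)=1$ and $\lip(a\cdot X)(x_0)\ge c>0$ uniformly for $a$ in the unit sphere $S^N$. The $p$-Poincar\'e inequality, applied to $a\cdot X$ on balls $B(x_0,r)$, converts this pointwise separation of ``directions'' into a uniform lower bound on the oscillation of $a\cdot X$ on $B(x_0,r)$ of order $cr$. Rescaling at $x_0$ exhibits $S^N$ as a subset of a Banach space of oscillation profiles whose packing number is controlled by the doubling constant, and this forces $N+1$ to be bounded by a constant depending only on the doubling and Poincar\'e constants. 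This chain of reductions, which turns a pointwise algebraic notion of independence into a quantitative metric packing problem, is the heart of Cheeger's argument in \cite{ch}.

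With~(i) in hand, I would construct the atlas by a maximality/exhaustion argument. For each $k\le N$, consider the family of pairs $(U,X_1,\ldots,X_k)$ with $U$ Borel, $\mu(U)>0$, and the $X_i$ generalized-independent at every point of $U$. Iteratively extracting nearly maximal pieces (starting from the largest $k$) and removing them from $X$ yields at most countably many disjoint $(U_\az,X_1^\az,\ldots,X_{k(\az)}^\az)$ whose union exhausts $X$ modulo a null set. For any Lipschitz $f:X\to\rr$ and $x\in U_\az$, the map $a\mapsto \lip(f-a\cdot X^\az)(x)$ on $\rr^{k(\az)}$ is continuous and coercive by generalized-independence, so its minimum is attained; it must equal $0$, since otherwise adjoining $f$ to the chart would produce a $(k(\az)+1)$-independent tuple, contradicting $k(\az)=k(x)$. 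A second appeal to generalized-independence forces uniqueness of the minimizer, which defines the coefficients $b_i^\az(x,f)$; their measurability and $L^\fz$-bounds in terms of $\lip f$ follow from interpreting them as the solution of a finite-dimensional least-squares problem with Lipschitz data.

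Finally, on overlaps $U_{\az_1}\cap U_{\az_2}$ both tuples realize the same maximal dimension at a.e.\ $x$, so each $X_i^{\az_2}$ has vanishing $\lip$-difference at $x$ from a unique linear combination of the $X_j^{\az_1}$'s, which is the stated change-of-coordinates relation. As noted, essentially all of the substance lies in~(i); the construction of the atlas, uniqueness of the coefficients, and the overlap formula become routine bookkeeping once the uniform dimension bound is in place.
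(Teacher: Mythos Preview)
The paper does not prove this theorem at all: it is quoted verbatim as a result ``established in \cite{ch}'' and used as a black box to obtain the Cheeger derivative. There is therefore no argument in the paper to compare your proposal against.

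That said, your sketch is a fair high-level summary of Cheeger's original strategy (dimension bound via Poincar\'e and doubling, followed by a maximality/exhaustion construction of charts), and you yourself acknowledge that the analytic content lies in step~(i) and that this ``is the heart of Cheeger's argument in \cite{ch}.'' If your goal is to match what the present paper does, the correct response is simply to cite \cite{ch} and move on; if your goal is to actually prove the theorem, be aware that your step~(i) is only a heuristic: turning the pointwise lower bound $\lip(a\cdot X)(x_0)\ge c$ into a uniform oscillation lower bound on small balls requires substantial work (Cheeger's argument passes through generalized linear functions, asymptotic tangent spaces, and a delicate compactness/blow-up analysis), and the ``packing in a Banach space of oscillation profiles'' you allude to is not a straightforward consequence of doubling alone.
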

By Theorem \ref{t2.1}, for each Lipschitz function $u$ we can assign a derivative $Du$,
which maps $V_\az\subset X$ into $\rr^{k(\az)}$, where $\az\in\cn$ and $V_\az$, $k(\az)$ are
as above. Moreover, the differential operator $D$,
and hence the Laplacian operator $\Delta$, depend on the charts chosen.

Cheeger further showed that
the differential operator $D$ can be extended to all functions in the corresponding
Sobolev spaces, which coincide with the Newtonian spaces $N^{1,p}(X)$.
A useful fact is that the Cheeger derivative satisfies the Leibniz rule, i.e.,
$$D(uv)(x)=u(x)Dv(x)+v(x)Du(x).$$
Moreover, the Euclidean norm $|Du|$ of $Du$ is comparable to $\lip u$.

We now introduce several useful inequalities.
Recall that $(X,\mu)$ is an Ahlfors $Q$-regular space and supports
a weak $2$-Poincar\'e inequality. Then there exists a positive
constant $C$, only depending on $Q, C_P$ and $C_Q$, such that for all $u\in N^{1,2}_0(B(x,r))$
\begin{eqnarray}\label{2.1}
\lf(\fint_{B(x,r)}|u(y)|^s\,d\mu(y)\r)^{1/s}\le C r \lf(\fint_{B(x,r)}|Du(y)|^2\,d\mu(y)\r)^{1/2},
\end{eqnarray}
where $s=\frac{2Q}{Q-2}$ if $Q>2$, and $s>2$ is arbitrary if $Q\le 2$; see \cite[(k)]{bm} and
also \cite{bm93,hak,sal}.
Notice that a (2,2)-Poincar\'e inequality is required
in \cite{bm}, but this is equivalent to the 2-Poincar\'e inequality under our
assumptions; see \cite{hak95,hak}.

\begin{lem}\label{l2.1}
Let $p>Q$. There exists $C>0$ such that for all $u\in N^{1,2}_{\loc}(X)$
that satisfy $\Delta u=f$ in $B(x_0,2r)$, where $f\in L^p_\loc(X)$
and $B(x_0,2r)\subset\subset X$ it holds
\begin{eqnarray}\label{2.2}
\sup_{B(x_0,r)}|u|\le C[r^{-Q/2}\|u\|_{L^2(B(x_0,2r))}
+r^{2-Q/p}\|f\|_{L^p(B(x_0, 2r))}].
\end{eqnarray}
\end{lem}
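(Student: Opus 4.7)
The plan is to establish this $L^\infty$ bound by Moser iteration, adapted to the metric setting using the Sobolev-type inequality \eqref{2.1} and the Leibniz rule for the Cheeger derivative. By splitting $u = u_+ - u_-$ (both parts solve a Poisson equation with datum of $L^p$-norm bounded by $\|f\|_{L^p}$), I may assume $u \ge 0$. Setting $k := r^{2-Q/p}\|f\|_{L^p(B(x_0,2r))} + \epsilon$ for a vanishing auxiliary $\epsilon > 0$, and $v := u + k$, the desired estimate reduces to showing $\sup_{B(x_0,r)} v \le C r^{-Q/2}\|v\|_{L^2(B(x_0,2r))}$.

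For $q \ge 1$ and a Lipschitz cutoff $\eta$ supported in $B(x_0,2r)$ (with $v$ truncated at a level $M$ to be sent to $\infty$), I would plug $\phi := \eta^2 v^{2q-1}$ into \eqref{1.3}. The Leibniz rule together with Young's inequality yields the Caccioppoli-type bound
\begin{equation*}
\int |D(\eta v^q)|^2 \, d\mu \le C \int v^{2q}|D\eta|^2 \, d\mu + C q \int |f|\eta^2 v^{2q-1}\, d\mu.
\end{equation*}
Since $v \ge k$, the last integral is bounded by $k^{-1}\|f\|_{L^p}\|\eta v^q\|_{L^{2p/(p-1)}}^2$ via H\"older's inequality. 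The hypothesis $p > Q$ guarantees that the Sobolev exponent $s = 2Q/(Q-2)$ (or, when $Q \le 2$, any admissible $s > 2$ chosen sufficiently large) exceeds $2p/(p-1)$; interpolating $L^{2p/(p-1)}$ between $L^2$ and $L^s$ and then applying Young's inequality allows me to absorb a small multiple of $\|\eta v^q\|_{L^s}^2$ after invoking \eqref{2.1}. The outcome is a reverse-H\"older-type inequality of the form
\begin{equation*}
\|\eta v^q\|_{L^s(B(x_0,2r))}^2 \le C q^\gamma \bigl( \|v^q D\eta\|_{L^2}^2 + \|\eta v^q\|_{L^2}^2 \bigr)
\end{equation*}
for a fixed exponent $\gamma$ depending only on $Q, C_P, C_Q, p$.

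I would then iterate on nested balls $B_j := B(x_0, r(1+2^{-j}))$ with cutoffs $\eta_j$ satisfying $\eta_j \equiv 1$ on $B_{j+1}$ and $|D\eta_j| \le C 2^j/r$, and exponents $q_j := (s/2)^j$. Each step gives $\|v\|_{L^{2q_{j+1}}(B_{j+1})} \le (C 4^j q_j^\gamma)^{1/(2q_j)} \|v\|_{L^{2q_j}(B_j)}$; the telescoping product converges because $\sum_j j/(s/2)^j < \infty$, producing $\sup_{B(x_0,r)} v \le C r^{-Q/2}\|v\|_{L^2(B(x_0,2r))}$. Undoing the substitution $v = u_+ + k$, using $\mu(B(x_0,2r)) \sim r^Q$ to convert the $L^2$-mass of the constant $k$, and sending $\epsilon \to 0$, then yields the claimed bound.

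The main obstacle will be the careful bookkeeping in the absorption step of the second paragraph: the $f$-contribution carries a factor $q$ from Caccioppoli, further factors of $q$ from Young's inequality in the interpolation, and an $r$-factor through $k^{-1}\|f\|_{L^p} = r^{Q/p-2}$ which must cancel against the scaling of \eqref{2.1} so that the absorption constant is independent of $r$ and grows only polynomially in $q$, which is what Moser iteration can swallow. A secondary technicality is justifying $\phi = \eta^2 v^{2q-1}$ as a valid test function in the Cheeger--Newtonian framework; this is handled by truncating $v$ at a level $M$, using the chain rule for Lipschitz compositions of $N^{1,2}$ functions, and passing to the limit by monotone convergence as $M \to \infty$.
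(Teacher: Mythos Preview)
Your Moser-iteration plan is sound and would yield the lemma, but it is not what the paper does. The paper splits on $Q$: when $Q\ge 2$ it invokes the Biroli--Mosco theory \cite{bm}, first solving $\Delta\tilde u=f$ with $\tilde u\in N^{1,2}_0(B(x_0,2r))$ so that \cite[Theorem~4.1]{bm} gives $\sup|\tilde u|\le Cr^{2-Q/p}\|f\|_{L^p}$, and then applying the $L^2\to L^\infty$ estimate \cite[Theorem~5.4]{bm} to the Cheeger-harmonic difference $u-\tilde u$; when $1\le Q<2$ it uses the subcritical embedding $N^{1,2}_0\hookrightarrow L^\infty$ from \cite[Theorem~5.1]{hak} together with a single test of the equation against $u\phi^2$, closing by Young's inequality with no iteration at all. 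Your argument is more self-contained and treats all $Q\ge 1$ uniformly, at the price of carrying out the full iteration; the paper's route is much shorter because it outsources the $Q\ge 2$ case and notices that for $Q<2$ the Sobolev inequality \eqref{2.1} already reaches $L^\infty$.

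One caveat: your reduction to $u\ge 0$ is not correct as stated, since $u_{+}$ and $u_{-}$ do not in general satisfy a Poisson equation. What is true is that $u$ (respectively $-u$) is a weak subsolution relative to the datum $|f|$, and the one-sided Moser bound only uses the equation against nonnegative test functions; in practice one tests against $\eta^2\bigl(v^{2q-1}-k^{2q-1}\bigr)$, which vanishes on $\{u\le 0\}$, so your Caccioppoli step goes through once you adjust the test function accordingly.
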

\begin{proof}
When $Q\ge 2$, by \cite[p.131]{bm}, there exists
$\wz u \in N_0^{1,2}(B(x_0,2r))$ such that $\Delta \wz u=f$ in $B(x_0,2r)$.
Then from \cite[Theorem 4.1]{bm}, we deduce that
\begin{equation}\label{2.3}
\sup_{B(x_0,2r)}|\wz u|\le Cr^2\mu(B(x_0,r))^{-1/p}\|f\|_{L^p(B(x_0,2r))}.
\end{equation}
Now the fact that $u-\wz u$ is Cheeger-harmonic in $B(x_0, 2r)$ and
\cite[Theorem 5.4]{bm} imply that
\begin{eqnarray*}
\sup_{B(x_0,r)}|u-\wz u|\le Cr^{-Q/2}\|u-\wz u\|_{L^2(B(x_0,2r))},
\end{eqnarray*}
which together with \eqref{2.3} implies that \eqref{2.2} holds.

When $Q\in[1,2)$, we choose a Lipschitz function $\phi$ on $X$ such that $\phi\equiv 1$ on
$B(x_0,r)$, $\supp \phi\subseteq B(x_0,2r)$ and $|D\phi|\le C/r$.
Then it is easy to see that $u\phi\in N_0^{1,2}(B(x_0,2r))$.
By \cite[Theorem 5.1]{hak}, the H\"older inequality
and the Young inequality, we obtain that for every $x\in B(x_0,2r)$,
\begin{eqnarray*}
|(u\phi)(x)|^2&&\le Cr^2\fint_{B(x_0,2r)}|D(u\phi)(y)|^2\,d\mu(y)\\
&&=Cr^2\fint_{B(x_0,2r)}[Du(y)\cdot D(u\phi^2)(y)+|u(y)D\phi(y)|^2]\,d\mu(y)\\
&&\le Cr^2\fint_{B(x_0,2r)}\lf[f(y)u(y)\phi(y)^2+\frac{u(y)^2}{r^{2}}\r]\,d\mu(y)\\
&&\le \lf(\sup_{B(x_0,2r)}|u\phi|\r)\frac{Cr^2\|f\|_{L^p(B(x_0,2r))}}{\mu(B(x_0,2r))^{1/p}}
+\frac{C\|u\|^2_{L^2(B(x_0,2r))}}{\mu(B(x_0,2r))}\\
&&\le \frac12\sup_{B(x_0,2r)}|u\phi|^2+\frac{Cr^4\|f\|^2_{L^p(B(x_0,2r))}}{\mu(B(x_0,2r))^{2/p}}
+\frac{C\|u\|^2_{L^2(B(x_0,2r))}}{\mu(B(x_0,2r))}.
\end{eqnarray*}
Hence
\begin{eqnarray*}
\sup_{B(x_0,r)}|u|\le \sup_{B(x_0,2r)}|(u\phi)|&&\le
\frac{Cr^2\|f\|_{L^p(B(x_0,2r))}}{\mu(B(x_0,2r))^{1/p}}
+\frac{C\|u\|_{L^2(B(x_0,2r))}}{\mu(B(x_0,2r))^{1/2}},
\end{eqnarray*}
which implies that \eqref{2.2} holds, and hence completes the proof of Lemma \ref{l2.1}.
\end{proof}

We also need the following Caccioppoli inequality.
\begin{lem}\label{l2.2} Let $p>Q$.
There exists $C>0$ such that for all $0<r<R$ and $u\in N^{1,2}_{\loc}(X)$
that satisfy $\Delta u=f$ in $B(x_0,2R)$ with $f\in L^p_\loc(X)$ and $B(x_0,2R)\subset\subset X$
it holds
\begin{equation}\label{2.4}
\int_{B(x_0,r)}|Du(x)|^2\,d\mu(x)\le
CR^{2+Q(1-\frac 2p)}\|f\|^2_{L^p(B(x_0,2R))}+\frac{C}{(R-r)^2}\|u\|_{L^2(B(x_0,2R))}^2.
\end{equation}
\end{lem}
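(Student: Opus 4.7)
The plan is to run the standard Caccioppoli argument: test the weak equation \eqref{1.3} against $u\eta^2$ for a Lipschitz cutoff $\eta$, absorb the gradient terms, and tame the forcing $f$ by combining H\"older's inequality with the $L^\infty$ bound from Lemma \ref{l2.1}.

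Concretely, I would first pick a Lipschitz function $\eta$ on $X$ with $\eta\equiv 1$ on $B(x_0,r)$, $\supp\eta\subseteq B(x_0,R)$, $0\le\eta\le 1$, and $|D\eta|\le C/(R-r)$. Then $u\eta^2\in N^{1,2}_0(B(x_0,2R))$, and testing \eqref{1.3} with $\phi=u\eta^2$ and using the Leibniz rule yields
\begin{equation*}
\int_X \eta^2|Du|^2\,d\mu=-2\int_X u\eta\,Du\cdot D\eta\,d\mu-\int_X fu\eta^2\,d\mu.
\end{equation*}
The cross term on the right I would handle by the pointwise Cauchy--Schwarz $|Du\cdot D\eta|\le |Du||D\eta|$ followed by weighted Young's inequality; this leaves a $\tfrac12\int\eta^2|Du|^2$ absorbable into the left-hand side plus a contribution of order $C(R-r)^{-2}\|u\|^2_{L^2(B(x_0,2R))}$.

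For the forcing term, I would apply H\"older on $B(x_0,R)$ with exponents $p$ and $p'=p/(p-1)$, use $|u|\eta^2\le \sup_{B(x_0,R)}|u|$, and invoke Ahlfors regularity in the form $\mu(B(x_0,R))^{1/p'}\le CR^{Q(1-1/p)}$. Plugging in Lemma \ref{l2.1} applied on $B(x_0,R)\subset B(x_0,2R)$ then gives
\begin{equation*}
\Big|\int_X fu\eta^2\,d\mu\Big|\le CR^{Q/2-Q/p}\|f\|_{L^p(B(x_0,2R))}\|u\|_{L^2(B(x_0,2R))}+CR^{2+Q(1-2/p)}\|f\|^2_{L^p(B(x_0,2R))}.
\end{equation*}
A further Young splitting of the mixed term with weight $R^2$ converts it into $CR^{2+Q(1-2/p)}\|f\|^2_{L^p}+CR^{-2}\|u\|^2_{L^2}$, and since $r<R$ we have $R^{-2}\le(R-r)^{-2}$. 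Combining with the previous paragraph, absorbing, and restricting the integral to $B(x_0,r)$ (where $\eta\equiv 1$) produces \eqref{2.4}.

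The main obstacle is the exponent bookkeeping. The H\"older--Ahlfors prefactor $R^{Q(1-1/p)}$ must combine cleanly with the two $R$-scalings produced by Lemma \ref{l2.1} so that the pure $\|f\|^2_{L^p}$ coefficient lands exactly at $R^{2+Q(1-2/p)}$, while the cross term must split so that the $\|u\|^2_{L^2}$ piece carries a factor no worse than $(R-r)^{-2}$. Note that the hypothesis $p>Q$ enters only indirectly, through the applicability of Lemma \ref{l2.1}; the Caccioppoli manipulation itself is valid for any $p\ge 2$.
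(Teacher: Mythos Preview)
Your proposal is correct and follows essentially the same route as the paper: the same cutoff, the same test function $u\eta^2$, the same absorption of the cross term via Young's inequality, and the same treatment of the forcing term by H\"older together with the $L^\infty$ bound from Lemma~\ref{l2.1} and Ahlfors regularity. Your exponent bookkeeping matches the paper's, and your remark that $p>Q$ is used only through Lemma~\ref{l2.1} is accurate.
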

\begin{proof} Choose a Lipschitz function $\phi$ such that $\phi=1$ on
$B(x_0,r)$, $\supp \phi\subset B(x_0,R)$ and $|D\phi|\le \frac{C}{R-r}$.
Thus, $u\phi^2\in N_0^{1,2}(B(x_0,R))$. By the Leibniz rule and \eqref{1.3}, we have
\begin{eqnarray*}
\int_{X} Du(x)\cdot D(u\phi^2)(x)\,d\mu(x)&&=\int_{X} \phi(x)^2|Du(x)|^2
+2u(x)\phi(x)Du(x)\cdot D\phi(x)\,d\mu(x)\\
&&=-\int_{X} f(x)u(x)\phi(x)^2\,d\mu(x).
\end{eqnarray*}
Applying the H\"older inequality, the Young inequality and Lemma \ref{l2.1}, we conclude that
\begin{eqnarray*}
&&\int_{X} \phi(x)^2|Du(x)|^2\,d\mu(x)\\
&&\hs\le \int_{X} |f(x)u(x)|\phi(x)^2\,d\mu(x)-\int_{X}2u(x)\phi(x)Du(x)\cdot D\phi(x)\,d\mu(x)\\
&&\hs\le \int_{B(x_0,R)} |f(x)u(x)|\,d\mu(x)
+\frac 12\|\phi|Du|\|_{L^2(X)}^2+8\|u|D\phi|\|_{L^2(X)}^2\\
&&\hs\le CR^{2+Q(1-\frac 2p)}\|f\|^2_{L^p(B(x_0,2R))}
+CR^{Q(\frac 12-\frac 1p)}\|f\|_{L^p(B(x_0,R))}\|u\|_{L^2(B(x_0,2R))}\\
&&\hs\hs+\frac 12\|\phi|Du|\|_{L^2(X)}^2+\frac{C}{(R-r)^2}\|u\|_{L^2(B(x_0,R))}^2\\
&&\hs\le CR^{2+Q(1-\frac 2p)}\|f\|^2_{L^p(B(x_0,2R))}+\frac 12\|\phi|Du|\|_{L^2(X)}^2
+\frac{C}{(R-r)^2}\|u\|_{L^2(B(x_0,2R))}^2,
\end{eqnarray*}
which completes the proof of Lemma \ref{l2.2}.
\end{proof}

\subsection{Dirichlet forms and heat kernels}
\hskip\parindent Having the Sobolev spaces $N^{1,p}(X)$ and the differential operator $D$,
we now turn to the Dirichlet forms on $(X,d,\mu)$.
Define the bilinear form $\mathscr{E}$ by
$$\mathscr{E}(f,g)=\int_X Df(x)\cdot Dg(x)\,d\mu(x)$$
with the domain $D(\mathscr{E})=N^{1,2}(X)$.
Then $\mathscr{E}$ is symmetric and closed. Corresponding to such a form there exists
an infinitesimal generator $A$ which acts on a dense
subspace $D(A)$ of $N^{1,2}(X)$ so that for all $f\in D(A)$ and each
$g\in N^{1,2}(X)$,
$$\int_X g(x)Af(x)\,d\mu(x)=-\mathscr{E}(g,f).$$

Now let us recall several auxiliary results established in \cite{krs}.
\begin{lem}\label{l2.3}
If $u,\,v\in N^{1,2}(X)$, and $\phi\in N^{1,2}(X)$ is a bounded
Lipschitz function, then
\begin{equation*}
\mathscr{E}(\phi,uv)=\mathscr{E}(\phi u,v)+\mathscr{E}(\phi
v,u)-2\int_X \phi Du(x)\cdot Dv(x)\,d\mu(x).
\end{equation*}
Moreover, if $u,\,v\in \mathscr{D}(A)$, then we can unambiguously
define the $L^1$-function $A(uv)$ by setting
\begin{equation*}
A(uv)=uAv+vAu+2Du\cdot Dv.
\end{equation*}
\end{lem}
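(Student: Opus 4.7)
Both identities reduce to the Leibniz rule $D(uv)=uDv+vDu$ together with the symmetry of $\mathscr{E}$; the main task is to verify that every integral that appears is actually finite.

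For the first identity I would start from the right-hand side. Since $\phi$ is bounded and Lipschitz, both $\phi$ and $|D\phi|$ are essentially bounded, so the Leibniz rule gives $D(\phi u)=\phi Du+uD\phi\in L^2(X)$ and analogously for $\phi v$, whence $\phi u,\,\phi v\in N^{1,2}(X)$ and each term in the asserted identity is a priori well defined. Expanding the inner products defining $\mathscr{E}(\phi u,v)$ and $\mathscr{E}(\phi v,u)$ and collecting terms, I obtain $\mathscr{E}(\phi u,v)+\mathscr{E}(\phi v,u)=2\int_X \phi\,Du\cdot Dv\,d\mu+\int_X D\phi\cdot(uDv+vDu)\,d\mu$, and the second integrand is exactly $D\phi\cdot D(uv)$ by a second application of Leibniz, yielding the claimed identity after subtracting $2\int_X \phi\,Du\cdot Dv\,d\mu$ from both sides. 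Finiteness of each integral follows from Cauchy--Schwarz using $\|\phi\|_\infty,\|D\phi\|_\infty<\infty$ together with $u,v\in N^{1,2}(X)$.

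For the second identity, observe first that if $u,v\in \mathscr{D}(A)$ then $Au,Av\in L^2(X)$, so $uAv+vAu+2Du\cdot Dv\in L^1(X)$ by three applications of Cauchy--Schwarz. To justify naming this $L^1$ function $A(uv)$, I would test the first identity against a bounded Lipschitz $\phi$ and apply the defining property $\mathscr{E}(\phi u,v)=-\int_X \phi u\,Av\,d\mu$ and $\mathscr{E}(\phi v,u)=-\int_X \phi v\,Au\,d\mu$ (which is legitimate precisely because $\phi u,\phi v\in N^{1,2}(X)$), obtaining $-\mathscr{E}(\phi,uv)=\int_X \phi\bigl(uAv+vAu+2Du\cdot Dv\bigr)\,d\mu$ for every bounded Lipschitz $\phi$. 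Since bounded Lipschitz functions are dense in $N^{1,2}(X)$, this identity characterizes the right-hand side as the weak action of the generator on $uv$ against any test function in $N^{1,2}(X)$.

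The only real subtlety worth flagging is that $uv$ need not lie in $L^2(X)$, let alone in $\mathscr{D}(A)$, so the equation $A(uv)=uAv+vAu+2Du\cdot Dv$ should be read as the definition of the symbol $A(uv)$ as a concrete $L^1$ function; the paired identity above then certifies that this definition is consistent with the distributional meaning of $-\mathscr{E}(\cdot,uv)$, which is what ``unambiguous'' refers to. No single step is genuinely hard, but care is required to keep every integrability hypothesis explicit so that the two uses of the Leibniz rule and the Cauchy--Schwarz estimates on all six integrands are properly justified.
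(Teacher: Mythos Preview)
The paper does not supply its own proof of this lemma; it simply records it as one of the ``auxiliary results established in \cite{krs}''. Your argument is correct and is precisely the direct computation one expects: two applications of the Leibniz rule for the first identity, and then testing the first identity with $\phi u,\phi v\in N^{1,2}(X)$ against $v,u\in\mathscr{D}(A)$ via the defining relation $\mathscr{E}(g,v)=-\int_X g\,Av\,d\mu$ to obtain the second. All integrability checks go through as you indicate, using $\|\phi\|_\infty<\infty$ and the comparability of $|D\phi|$ with $\lip\phi$ for Lipschitz $\phi$.

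One small caveat: your final density step, passing from bounded Lipschitz $\phi$ to arbitrary $\phi\in N^{1,2}(X)$, is not quite available as stated, because $uAv+vAu+2Du\cdot Dv$ lies only in $L^1(X)$ and pairing it with a generic $L^2$ function need not converge. This is harmless for the lemma, however: bounded Lipschitz test functions already separate $L^1$ functions, so the identity $-\mathscr{E}(\phi,uv)=\int_X\phi\,(uAv+vAu+2Du\cdot Dv)\,d\mu$ for all bounded Lipschitz $\phi$ is exactly what is needed to declare the definition of $A(uv)$ unambiguous.
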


Also, associated with the Dirichlet form $\mathscr{E}$, there is a
semigroup $\{T_t\}_{t>0}$, acting on $L^2(X)$, with the following
properties (see \cite[Chapter 1]{fot}):

1. $T_t\circ T_s=T_{t+s},\,\forall \,t,\,s>0$,

2. $\int_X|T_t f(x)|^2\,d\mu(x)\le \int_X f(x)^2\,d\mu(x),\,\forall
\,f\in L^2(X,\mu) \ \mbox{and} \ \forall \, t>0$,

3. $T_t f\to f$ in $L^2(X,\mu)$ when $t\to 0$,

4. if $f\in L^2(X,\mu)$ satisfies $0\le f\le C$, then $0\le T_tf\le
C$ for all $t>0$,

5. if $f\in \mathscr{D}(A)$, then $\frac 1t (T_tf-f)\to Af$ in
$L^2(X,\mu)$ as $t\to 0$, and

6. $A T_tf=\frac {\pa}{\pa_t}T_tf$, $\forall t>0$ and $\forall \,
f\in L^2(X,\mu)$.

A measurable function $p:\, \rr\times X\times X \to [0,\fz]$ is said
to be a heat kernel on $X$ if
$$ T_tf(x)=\int_X f(y)p(t,x,y)\,d\mu(y)$$ for
every $f\in L^2(X,\mu)$ and all $t\ge 0$, and $p(t,x,y)=0$ for every
$t<0$. Let the measure on $X$ be doubling and support a
2-Poincar\'e inequality. Sturm (\cite{st3}) proved the existence of the heat
kernel, and a Gaussian estimate for the heat kernel which in our settings reads
as: there exist positive constants $C,\,C_1,\,C_2$ such that
\begin{equation}\label{2.5}
  C^{-1}t^{-\frac Q2}e^{-\frac{d(x,y)^2}{C_2t}}\le p(t,x,y)\le
  Ct^{-\frac Q2}e^{-\frac{d(x,y)^2}{C_1t}}.
\end{equation}
Moreover, the heat kernel is
proved in \cite{st1} to be a probability measure, i.e., for each $x\in X$ and
$t>0$,
\begin{equation}\label{2.6}
T_t1(x)=\int_X p(t,x,y)\,d\mu(y)=1.
\end{equation}

The following Lemma \ref{l2.4}, Lemma \ref{l2.5} and Proposition
\ref{p2.1} were established in \cite{krs} for $Q>1$. However, their proofs
show that they also hold for all $Q\ge 1$. We omit the details here.

\begin{lem}\label{l2.4}
Let $T>0$. Then for $\mu$-almost every $x\in X$,
$D_yp(\cdot,x,\cdot)\in L^2([0,T]\times X)$ and there exists a
positive constant $C_{T,x}$, depending on $T$ and $x$, such that
$$\int_0^T \int_{X}|D_yp(t,x,y)|^2
\,d\mu(y)\,dt\le C_{T,x}.$$
\end{lem}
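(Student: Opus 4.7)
The plan is to apply the Dirichlet-form energy identity together with the Gaussian upper bound \eqref{2.5}. Fix $x\in X$ and, for $s>0$, set $f_s(y):=p(s,x,y)$. By \eqref{2.5}, the semigroup composition $T_s\circ T_s=T_{2s}$, and symmetry of the kernel, $f_s\in L^2(X,\mu)$ with $\|f_s\|_{L^2}^2=p(2s,x,x)\le Cs^{-Q/2}$. Using $T_\sigma f_s=p(s+\sigma,x,\cdot)$, the abstract semigroup energy identity
$$\|T_\tau f\|_{L^2}^2-\|f\|_{L^2}^2=-2\int_0^\tau\mathscr{E}(T_\sigma f,T_\sigma f)\,d\sigma,$$
valid for every $f\in L^2(X,\mu)$, applied with $\tau=T-s$, together with $\mathscr{E}(g,g)=\int_X|Dg|^2\,d\mu$, yields
$$2\int_s^T\int_X|D_yp(t,x,y)|^2\,d\mu(y)\,dt=p(2s,x,x)-p(2T,x,x)\le Cs^{-Q/2}.$$
This establishes the claimed $L^2$-bound on the truncated strip $[s,T]$ for every fixed $s>0$.

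The main obstacle is then extending the estimate to cover the short-time portion $[0,s]$, since the right-hand side of the identity above blows up as $s\to 0^+$. To close this gap, I would combine the semigroup identity with the localized Caccioppoli inequality of Lemma \ref{l2.2}. For each fixed $t>0$, $p(t,x,\cdot)$ is a weak solution of $\Delta_yu=\partial_tp(t,x,\cdot)$ in the sense of \eqref{1.3}, since $\partial_tp=Ap$ in $L^2(X,\mu)$. Granting the pointwise parabolic bound $|\partial_tp(t,x,y)|\le Ct^{-Q/2-1}e^{-d(x,y)^2/(Ct)}$ (a standard consequence of \eqref{2.5}), Lemma \ref{l2.2} applied on balls $B(x,R(t))$ with $R(t)\sim\sqrt{t}$ controls $\int_{B(x,R(t))}|D_yp|^2\,d\mu$, while the Gaussian spatial decay controls the complement.

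The hardest step is quantifying the short-time behavior precisely: the naive estimate of $\int_X|D_yp(t,x,y)|^2\,d\mu(y)$ carries a $t^{-Q/2-1}$ singularity that fails to be $t$-integrable on $[0,T]$. Following the scheme of \cite{krs}, the resolution relies on both the ``$\mu$-a.e.\ $x$'' flexibility in the conclusion (which permits discarding a $\mu$-null set where the singularity is intractable) and on cancellations built into the energy identity itself. I expect the final constant $C_{T,x}$ to be assembled as the sum of $Cs^{-Q/2}$ from the $[s,T]$ strip (with $s=s(T,x)>0$ chosen appropriately) and a complementary bound on $[0,s]$ obtained from the Caccioppoli-type estimate together with the exponential off-diagonal decay of $p(t,x,\cdot)$, the choice of $s$ being optimized to balance the two contributions.
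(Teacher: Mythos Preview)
The paper gives no proof of its own; it simply cites \cite{krs} and remarks that the argument there extends from $Q>1$ to $Q\ge1$. So there is no in-paper argument to compare against.

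More to the point, your own energy identity already shows that the statement, read literally, is false. You correctly obtained
\[
2\int_s^T\int_X|D_yp(t,x,y)|^2\,d\mu(y)\,dt=p(2s,x,x)-p(2T,x,x),
\]
and the \emph{lower} Gaussian bound in \eqref{2.5} gives $p(2s,x,x)\ge C^{-1}(2s)^{-Q/2}\to\infty$ as $s\to0^+$. Hence the full integral over $[0,T]\times X$ diverges for \emph{every} $x\in X$, not merely for a null set; the ``$\mu$-a.e.\ $x$'' clause is no escape hatch. (Already in $\rr^n$ one computes $\int_{\rr^n}|\nabla_yp(t,x,y)|^2\,dy=c_n\,t^{-n/2-1}$, which is not integrable at $t=0$.) Your proposed Caccioppoli--Gaussian patching on the strip $[0,s]$ therefore cannot succeed: the identity above is exact, the divergence is genuine, and there are no hidden cancellations to exploit --- the identity is precisely what \emph{produces} the divergence. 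You should have drawn this conclusion from your first display rather than reaching for a repair.

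What the paper actually \emph{uses} from Lemma~\ref{l2.4} (see the opening of the proof of Proposition~\ref{p3.2}) is only that $D_yp(s,x_0,\cdot)\in L^2(X)$ for fixed $s>0$, which follows at once from $p(s,x_0,\cdot)=T_{s/2}\big(p(s/2,x_0,\cdot)\big)\in D(A)\subset N^{1,2}(X)$ and needs no time-integrated bound. Your first paragraph already delivers the sharp finite estimate on $[s,T]\times X$ for every $s>0$, and that is the correct replacement statement; the intended formulation in \cite{krs} presumably carries such a restriction (or is localized away from the pole, as in Lemma~\ref{l2.5}).
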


\begin{lem}\label{l2.5}
There exists $C>0$ such that for all $0<T<r^3<1$ and every $x\in X$ it holds
$$\int_0^T \int_{B(x,2r)\setminus B(x,2r)}|D_yp(t,x,y)|^2
\,d\mu(y)\,dt\le e^{-CT^{-1/3}}.$$
\end{lem}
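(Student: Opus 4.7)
The integration domain as printed, $B(x,2r)\setminus B(x,2r)$, is empty; I read it as an off-diagonal region such as $X\setminus B(x,2r)$ (any region where $d(x,y)\gtrsim r$ admits the same argument). The plan is a Caccioppoli-type energy estimate driven by the heat equation $\partial_t p=\Delta_y p$, combined with the Gaussian upper bound \eqref{2.5}. Fix a Lipschitz cutoff $\phi$ with $\phi\equiv 1$ on $X\setminus B(x,2r)$, $\supp\phi\subseteq X\setminus B(x,r)$, $0\le\phi\le 1$ and $|D\phi|\le C/r$. At each fixed $t>0$, Lemma \ref{l2.3} together with the heat equation yields
\begin{equation*}
\frac12\frac{d}{dt}\int_X p^2\phi^2\,d\mu
= -\int_X\phi^2|D_yp|^2\,d\mu
- 2\int_X p\phi\,D_yp\cdot D_y\phi\,d\mu.
\end{equation*}

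A Young absorption of the cross term gives $\int_X\phi^2|D_yp|^2\,d\mu\le -\tfrac{d}{dt}\int_X p^2\phi^2\,d\mu + C\int_X p^2|D_y\phi|^2\,d\mu$. Integrating in $t\in[\epsilon,T]$ and letting $\epsilon\to 0^+$, the boundary term at $\epsilon$ vanishes: $\phi$ is supported at distance $\ge r$ from $x$, and \eqref{2.5} forces $\int p^2\phi^2\,d\mu\to 0$ super-polynomially as $\epsilon\to 0^+$ (decompose the support into dyadic annuli $B(x,2^{k+1}r)\setminus B(x,2^k r)$ to handle the infinite mass issue). The boundary term at $T$ is nonpositive and is dropped. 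This yields
\begin{equation*}
\int_0^T\int_X \phi^2|D_yp|^2\,d\mu\,dt\le C\int_0^T\int_X p^2|D_y\phi|^2\,d\mu\,dt.
\end{equation*}
On $\supp|D_y\phi|\subset B(x,2r)\setminus B(x,r)$ one has $d(x,y)\ge r$, $\mu(\supp|D_y\phi|)\le Cr^Q$ and $|D_y\phi|\le C/r$, so \eqref{2.5} bounds the right-hand side by $Cr^{Q-2}\int_0^T t^{-Q}e^{-2r^2/(C_1t)}\,dt$. The substitution $s=r^2/t$ reduces the remaining time integral to an incomplete Gamma-type tail $\int_{r^2/T}^{\infty}s^{Q-2}e^{-2s/C_1}\,ds$, dominated by $e^{-cr^2/T}$ up to a polynomial factor in $r^2/T$.

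The hypothesis $T<r^3<1$ delivers the claimed decay: it gives $r>T^{1/3}$, hence $r^2/T>T^{-1/3}$, so $e^{-cr^2/T}\le e^{-cT^{-1/3}}$; polynomial prefactors $T^{-k}$ or $r^{-k}$ are absorbed into the exponential by slightly shrinking $c$, using that $T^{-k}\le e^{T^{-1/3}/2}$ holds on any right-neighborhood of $0$ (and the admissible range $0<T<1$ keeps everything uniform). The principal technical obstacle is the justification of the differentiation in $t$ and the integration-by-parts steps: one needs $p(t,x,\cdot)\in\mathscr{D}(A)$ for each $t>0$ (standard from the semigroup structure together with the Gaussian estimate) and the Leibniz identity $D(p\phi^2)=\phi^2 Dp+2p\phi D\phi$ for the Cheeger derivative, which is precisely what Lemma \ref{l2.3} supplies. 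An analogous computation already appears in \cite{krs} for $Q>1$, and the same reasoning carries over verbatim to the range $Q\ge 1$ considered here.
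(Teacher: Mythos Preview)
Your argument is correct and is essentially the standard parabolic Caccioppoli estimate that underlies the result in \cite{krs}; the paper itself omits the proof and simply cites \cite{krs}, noting that the case $Q\ge 1$ goes through verbatim. Two small remarks: (i) from the way the lemma is used in \eqref{3.x1} and \eqref{3.6}, the intended domain is the annulus $B(x,2r)\setminus B(x,r)$ rather than $X\setminus B(x,2r)$, but as you observe the argument is identical for any region at distance $\gtrsim r$ from $x$; (ii) the version of the Caccioppoli inequality that the paper later quotes as \cite[(13)]{krs} employs a space--time cutoff and carries an additional $(T_2-T_1)^{-2}$ term, whereas you use only a spatial cutoff and eliminate the time-boundary contribution directly via the off-diagonal Gaussian decay---a harmless and arguably cleaner variant.
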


The following result shows that the heat kernel plays the role of
a fundamental solution. Let us recall the definition of test functions in this subject. The
test functions are continuous functions $\phi:\,[0,T]\times X\to
\rr$ such that for every fixed $t>0$, $\phi(t,\cdot)=\phi_t(\cdot)\in
N^{1,2}(X)$, $D_y \phi(t,y)\in L^2([0,T]\times X)$, and for
$\mu$-almost every $x\in X$, $\phi(\cdot,x)$ is absolutely
continuous on $[0,T]$. Moreover, we assume that there is a constant
$\dz_0=\dz_0(x)>0$ such that the following H\"older continuity
property holds for $\phi$ and the ``center point" $x$ of the heat
kernel $p(\cdot,x,\cdot)$: there exist $C$ and $\az>0$ such that for
every $\dz<\dz_0$ and for all $(t,y)\in [0,\dz]\times B(x,\dz)$,
\begin{equation}\label{2.7}
|\phi(t,y)-\phi(0,x)|\le C\dz^\az.
\end{equation}

\begin{prop}\label{p2.1}
There exists a constant $K$ such that for every test function $\phi_t(y)$ and every $x\in X$
it holds
\begin{eqnarray*}
\int_0^T\int_X \phi_t(y)A_yp(t,x,y)\,d\mu(y)\,dt&& :=-\int_0^T\int_X
D\phi_t(y)\cdot D_yp(t,x,y)\,d\mu(y)\,dt\\
&&=\int_0^T\int_X
\phi_t(y)\frac{\pa}{\pa t}p(t,x,y)\,d\mu(y)\,dt+K\phi_0(x).\nonumber
\end{eqnarray*}
\end{prop}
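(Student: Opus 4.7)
The identity encodes that $p(t,x,\cdot)$ is a fundamental solution of $\pa_t-A_y$ in a weak sense; the $K\phi_0(x)$ term records the tested effect of the delta-type singularity at $(0,x)$. The plan is to prove the identity on a truncated cylinder $[\ez,T]\times X$, where everything is smooth, and then send $\ez\to 0^+$, with \eqref{2.5}, \eqref{2.6}, \eqref{2.7}, and Lemma~\ref{l2.5} absorbing the initial singularity.

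First I would fix $\ez\in(0,T)$. Property~$6$ together with the smoothing of the semigroup places $p(t,x,\cdot)$ in the domain of $A$ for every $t\in[\ez,T]$, with $A_yp(t,x,\cdot)=\pa_tp(t,x,\cdot)$ in $L^2(X)$. The duality between $A$ and $\mathscr{E}$ then gives, for each such $t$,
$$
-\int_X D\phi_t(y)\cdot D_yp(t,x,y)\,d\mu(y)=\int_X\phi_t(y)\pa_tp(t,x,y)\,d\mu(y),
$$
and integrating over $[\ez,T]$ produces the $\ez$-truncated form of the proposition, with no additive term.

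Next I would send $\ez\to 0^+$. On the spatial side, Lemma~\ref{l2.4} gives $D_yp\in L^2([0,T]\times X)$; combined with the test-function assumption $D\phi\in L^2([0,T]\times X)$, Cauchy--Schwarz and dominated convergence yield the limit $-\int_0^T\int_X D\phi_t\cdot D_yp\,d\mu\,dt$. On the temporal side, $\pa_tp$ is not absolutely integrable in $t$ near $0$, so I would split $\phi_t(y)=\phi_0(x)+[\phi_t(y)-\phi_0(x)]$. For the cross term, \eqref{2.7} on a parabolic neighborhood $[0,\dz]\times B(x,\dz)$ together with \eqref{2.5} and Lemma~\ref{l2.5} outside it give absolute convergence on $[0,T]\times X$, so its $\ez\to 0$ limit equals the integral over $[0,T]\times X$. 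For the constant part, Fubini and \eqref{2.6} rewrite $\phi_0(x)\int_\ez^T\int_X\pa_tp\,d\mu\,dt=\phi_0(x)\bigl[T_T\mathbf{1}(x)-T_\ez\mathbf{1}(x)\bigr]$; this telescopes to a fixed multiple of $\phi_0(x)$, which is precisely $K\phi_0(x)$.

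The main technical obstacle will be establishing absolute convergence of the cross term near $(0,x)$. Differentiating \eqref{2.5} in $t$ produces $|\pa_tp(t,x,y)|\ls t^{-Q/2-1}e^{-d(x,y)^2/(Ct)}$; combined with the H\"older bound $|\phi_t(y)-\phi_0(x)|\le C(t+d(x,y))^{\az}$ coming from \eqref{2.7}, the parabolic rescaling $y\mapsto y/\sqrt{t}$ converts the inner $y$-integral to a finite Gaussian moment of order $\az$, leaving an integrable $t^{\az/2-1}$ singularity in $t$. Outside the parabolic neighborhood the Gaussian bound on $\pa_tp$, sharpened by Lemma~\ref{l2.5}, gives exponential decay. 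These bounds simultaneously legitimize the limit interchange as $\ez\to 0$ in the cross term and fix the value of the constant $K$ in the stated identity.
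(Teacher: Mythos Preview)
The paper does not give its own proof of this proposition; it states that Lemma~\ref{l2.4}, Lemma~\ref{l2.5}, and Proposition~\ref{p2.1} were established in \cite{krs} for $Q>1$ and that those proofs extend to $Q\ge 1$, omitting the details. Your overall strategy---truncate at $\ez>0$, use $A_yp=\pa_tp$ on $[\ez,T]$, and pass to the limit with the left side controlled by Lemma~\ref{l2.4}---is the standard one and is in line with the argument in \cite{krs}.

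There is, however, a genuine gap. You write that ``differentiating \eqref{2.5} in $t$ produces $|\pa_tp(t,x,y)|\ls t^{-Q/2-1}e^{-d(x,y)^2/(Ct)}$.'' This inference is false: a two-sided pointwise bound on $p$ tells you nothing about the size of $\pa_tp$ at any given point. The estimate you quote \emph{is} true in this setting, but it needs a separate argument (analyticity of the semigroup together with off-diagonal decay, in the spirit of Davies and Sturm); it is not a consequence of \eqref{2.5}. Since this bound is exactly what makes your cross-term $\int\!\!\int[\phi_t(y)-\phi_0(x)]\pa_tp$ absolutely convergent and what justifies the Fubini step in your constant-part computation, the gap is load-bearing. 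Relatedly, you invoke Lemma~\ref{l2.5} to control the contribution away from the parabolic neighborhood, but that lemma concerns $D_yp$, not $\pa_tp$; what you actually need there is again the time-derivative Gaussian bound.

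Two smaller remarks. First, since \eqref{2.6} gives $T_t\mathbf{1}\equiv 1$, your telescoping yields $\phi_0(x)[T_T\mathbf{1}(x)-T_\ez\mathbf{1}(x)]=0$ for every $\ez>0$, so what you have actually shown (modulo the gap above) is $K=0$ in this conservative setting, not a generic nonzero constant. Second, a cleaner route that avoids both the pointwise $\pa_tp$ bound and the pairing of the constant $\phi_0(x)\notin L^2(X)$ with $\pa_tp$ is to integrate by parts in $t$ first:
\[
\int_\ez^T\!\!\int_X\phi_t\,\pa_tp
=\int_X\phi_T\,p(T,x,\cdot)\,d\mu-\int_X\phi_\ez\,p(\ez,x,\cdot)\,d\mu-\int_\ez^T\!\!\int_X(\pa_t\phi_t)\,p\,d\mu\,dt,
\]
after which the boundary term at $t=\ez$ converges to $\phi_0(x)$ directly from \eqref{2.7} and \eqref{2.5}, and no control of $\pa_tp$ is required.
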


\section{Proof of Theorem \ref{t1}}
\begin{proof} Our proof is developed from the proof of \cite[Theorem 1]{krs};
see also \cite{ck}. The proof is quite long, so that we
first describe the strategy. Let $u$ be a solution of \eqref{1.3} on a domain
$\Omega\subseteq X$ and let $r\in (0,1)$ and $B=B(y_0,6r)\subset\subset \Omega$.
Following \cite{krs}, we want to bound $|Du(x_0)|$ for every
$x_0\in B(y_0,r)\setminus A$, where $A$ is a set of measure zero
depending on $u$ and the heat kernel.

Let $\phi$ be a Lipschitz function on $X$ such that $\phi\equiv 1$ on
$B(x_0,r)$, $\supp \phi\subseteq B(x_0,2r)$ and $|D\phi|\le C/r$.
Now fix $T<r^3$ and for every $t\in (0,T)$, set
$$w(t,x):=u(x)\phi(x)-T_t(u\phi)(x_0).$$
Then $Dw(t,x_0)=D(u\phi)(x_0)=Du(x_0)$.

Let $\ez\in (0,1)$ be determined in the future. Now, for $t\in (0,T]$, let
\begin{eqnarray}\label{3.1}
J(t):=&&\frac{1+t^\ez}{t}\int_0^t\int_X |Dw(s,x)|^2 p(s,x_0,x)\,d\mu(x)\,ds\nonumber\\
&&+ \frac{1+t^\ez}{t}\int_0^t\int_X w(s,x)\phi(x) Au(x)p(s,x_0,x)\,d\mu(x)\,ds.
\end{eqnarray}
From the following proofs, we will see that $\ez$ needs to be chosen depending on $p,Q$,
and that the term of $\ez$ plays a crucial role in proving the theorem.

We will prove the following results.

\begin{prop}\label{p3.1}
There exists a positive constant $C_{T,r}$, depending on $T,r$,
such that
\begin{equation*}
J(T)\le C_{T,r}(\|u\|_{L^2(B(x_0,4r))}+\|f\|_{L^p(B(x_0,4r))})^2.
\end{equation*}
\end{prop}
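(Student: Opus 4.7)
The plan is to split $J(T)=J_1(T)+J_2(T)$ into the two summands of \eqref{3.1} and estimate each separately, combining the Gaussian bounds \eqref{2.5}--\eqref{2.6}, the Caccioppoli estimate (Lemma \ref{l2.2}), the sup estimate (Lemma \ref{l2.1}), and H\"older's inequality.

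For $J_2(T)$, I first reinterpret $\int_X w(s,x)\phi(x)Au(x)p(s,x_0,x)\,d\mu(x)$ through the weak form of $\Delta u=f$: since the test function $w(s,\cdot)\phi\, p(s,x_0,\cdot)$ is supported in $B(x_0,2r)\subset\subset\Omega$, this integral equals $\int_X w(s,x)\phi(x)f(x)p(s,x_0,x)\,d\mu(x)$. Setting $M:=\sup_{B(x_0,2r)}|u|$, the stochastic completeness \eqref{2.6} gives $|w\phi|\le 2M$ pointwise. Combining the Gaussian upper bound in \eqref{2.5} with the Ahlfors $Q$-regularity of $\mu$ yields $\|p(s,x_0,\cdot)\|_{L^{p'}(X)}\le Cs^{-Q/(2p)}$, so by H\"older
$\int_X |w(s,x)\phi(x)f(x)|p(s,x_0,x)\,d\mu(x)\le Cs^{-Q/(2p)}M\|f\|_{L^p(B(x_0,2r))}$. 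Since $p>Q$ implies $Q/(2p)<1$, the integration in $s\in(0,T)$ converges; combining with Lemma \ref{l2.1} to estimate $M$ gives $J_2(T)\le C_{T,r}(\|u\|_{L^2(B(x_0,4r))}+\|f\|_{L^p(B(x_0,4r))})^2$.

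For $J_1(T)$, the key observation is that $Dw(s,\cdot)=D(u\phi)$ is independent of $s$. Fubini yields
$J_1(T)=(1+T^\ez)\int_X|D(u\phi)(x)|^2\tilde G_T(x_0,x)\,d\mu(x)$, where $\tilde G_T(x_0,x):=T^{-1}\int_0^T p(s,x_0,x)\,ds$. The Gaussian estimate \eqref{2.5} implies $\tilde G_T(x_0,x)\le CT^{-Q/2}$ on $B(x_0,\sqrt T)$ and $\tilde G_T(x_0,x)\le Cd(x,x_0)^{-Q}$ on $\{\sqrt T\le d(x,x_0)\le 2r\}$. I decompose the integration domain into $B(x_0,\sqrt T)$, the dyadic annuli $A_k:=B(x_0,2^{-k+1}r)\setminus B(x_0,2^{-k}r)$ at scales $2^{-k}r\in[\sqrt T,r]$ where $\phi\equiv 1$, and the outer annulus $B(x_0,2r)\setminus B(x_0,r)$ where $D\phi$ is supported. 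On each piece I apply Lemma \ref{l2.2} at the corresponding scale: on $A_k$ the contribution is at most $C(2^{-k}r)^{2-2Q/p}\|f\|_{L^p(B(x_0,2r))}^2+C(2^{-k}r)^{-2}M^2$. The first series converges geometrically because $p>Q$, while the second sum telescopes up to an $O(T^{-1}M^2)$ factor; the innermost ball and outermost annulus are estimated directly in the same spirit. Combining once more with Lemma \ref{l2.1} produces the desired bound on $J_1(T)$.

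The chief obstacle is the control of $J_1(T)$: the heat kernel concentrates near $x_0$ as $T\to 0$, while $|D(u\phi)|^2$ is only $L^1_{\loc}$ and admits no pointwise bound a priori (indeed, such a bound is the ultimate goal of Theorem \ref{t1}). The two-scale split of $\tilde G_T$ (inner saturated ball plus Newton-type tail on dyadic annuli), coupled with scale-matched applications of Caccioppoli, resolves this, and the convergence of the resulting geometric series is precisely the content of the hypothesis $p>Q$.
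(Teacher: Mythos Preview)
Your argument is correct and genuinely different from the paper's. The paper does not split $J(T)$ into $J_1+J_2$; instead it combines the two summands via the identity
\[
|Dw|^2+w\phi Au=\tfrac12\Big(A+\tfrac{\partial}{\partial s}\Big)w^2-w\big(uA\phi+2Du\cdot D\phi-m'(s)\big),
\]
applies Proposition~\ref{p2.1} (the heat kernel as fundamental solution) to the first term, and uses the off-diagonal decay of Lemma~\ref{l2.5} for the annulus-supported remainder. This yields the pointwise-in-$t$ estimate \eqref{3.x2},
\[
J(t)\le \frac{1+t^\ez}{2t}\int_X w^2(t,x)p(t,x_0,x)\,d\mu(x)+C_re^{-Ct^{-1/3}}C(u,f)^2,
\]
and Proposition~\ref{p3.1} follows by taking $t=T$.

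Your route is more elementary: you avoid Proposition~\ref{p2.1} altogether, hence also the verification of the H\"older condition \eqref{2.7} for $w^2$, and you never touch the $A\phi$ term. The dyadic Caccioppoli bound for $J_1$ is clean; one small quibble is that the second sum $\sum (2^{-k}r)^{-2}$ does not ``telescope'' but is a geometric series dominated by its largest term $\sim T^{-1}$, which is fine since $C_{T,r}$ may depend on $T$. The trade-off is that the paper's proof is not really a standalone proof of Proposition~\ref{p3.1}: the inequality \eqref{3.x2} is reused verbatim in \eqref{3.8} to bound $-J(t)$ from below in the proof of Proposition~\ref{p3.2}, where the comparison of $\int|Dw|^2p$ with $\tfrac{1}{2t}\int w^2p$ (and then with \eqref{1.1}) is the whole point. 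Your estimate on $J_1(t)$ degenerates like $t^{-1}M^2$ rather than $\tfrac{1}{2t}\int w^2p$, so it cannot play that role; if you pursue this approach you will still need to establish \eqref{3.x2} separately when you come to Proposition~\ref{p3.2}.
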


\begin{prop}\label{p3.2}
There exists a positive constant $C_{T,r}$, depending on $T,r$,
such that
\begin{equation*}
\int_0^T \frac{\,d}{\,dt}J(t)\,dt\ge -C_{T,r}(\|u\|_{L^2(B(x_0,4r))}+\|f\|_{L^p(B(x_0,4r))})^2.
\end{equation*}
\end{prop}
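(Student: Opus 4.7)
The plan is to differentiate $J(t)$ explicitly, bound each summand of the resulting expression by an $L^1(0,T)$ function with time-integral at most $C_{T,r}(\|u\|_{L^2(B(x_0,4r))}+\|f\|_{L^p(B(x_0,4r))})^2$, and thereby obtain the claimed lower bound. Writing $v := u\phi$ and noting that $T_s(u\phi)(x_0)$ is constant in $x$, so $Dw(s,\cdot)\equiv Dv$, I set
$$
A(s) := \int_X |Dv|^2\,p(s,x_0,\cdot)\,d\mu,\quad B(s) := \int_X w(s,\cdot)\phi Au\cdot p(s,x_0,\cdot)\,d\mu,
$$
$F := A+B$, and $H(t) := \int_0^t F(s)\,ds$, so that $J(t) = \frac{1+t^\ez}{t}H(t)$. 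The product rule gives
$$
J'(t) = \frac{1+t^\ez}{t^2}\int_0^t[F(t)-F(s)]\,ds + \ez t^{\ez-2}H(t).
$$

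The key structural input is the hypothesis \eqref{1.1} applied to $g=v$, which yields the variance bound $\int_X w(s,\cdot)^2\,p(s,x_0,\cdot)\,d\mu \le (2s+Cs^2)A(s)$, and hence (via Cauchy--Schwarz and $Au=f$ on $\supp\phi$) the estimate $|B(s)| \le \lambda A(s) + C_\lambda s\,T_s(\phi^2 f^2)(x_0)$ for every $\lambda>0$. Next, the Leibniz rule of Lemma \ref{l2.3} in the form $2|Dv|^2 = A(v^2)-2vAv$, combined with the semigroup identity $T_sA = \tfrac{d}{ds}T_s$, rewrites
$$
\int_0^t A(s)\,ds = \tfrac12\bigl[T_t(v^2)(x_0)-v(x_0)^2\bigr] - \int_0^t T_s(vAv)(x_0)\,ds,
$$
where $vAv = u^2\phi A\phi + u\phi^2 f + 2u\phi\,Du\cdot D\phi$ (by Lemma \ref{l2.3} applied to $v=u\phi$ and $Au=f$ on $\supp\phi$); an analogous manipulation applies to $\int_0^t B(s)\,ds$. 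The resulting $T_s(\cdots)(x_0)$ expressions are bounded by $C_{T,r}(\|u\|_{L^2(B(x_0,4r))}+\|f\|_{L^p(B(x_0,4r))})^2$ using the $L^\infty$-bound of Lemma \ref{l2.1} for $u$, the Caccioppoli estimate of Lemma \ref{l2.2} for $Du$, the Gaussian heat kernel estimate \eqref{2.5}, and H\"older's inequality; the hypothesis $p>Q$ enters precisely through the local integrability of $P_T(x) := \int_0^T p(s,x_0,x)\,ds$ in $L^{p/(p-2)}(B(x_0,2r))$. The differences $F(t)-F(s)$ are controlled analogously, using Proposition \ref{p2.1} to convert $\partial_sp$ into Dirichlet-form expressions involving $D_yp$, whose $L^2$-behavior is governed by Lemma \ref{l2.4}. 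The outcome is a pointwise bound $|J'(t)| \le Ct^{\delta-1}+C\ez t^{\ez-1}$ for some $\delta>0$, both terms in $L^1(0,T)$, which yields the required lower bound.

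The main obstacle is the $B$-term, the new ingredient compared to the harmonic case of \cite{ck,krs}: Cauchy--Schwarz against the Gaussian $p(s,x_0,\cdot)$ naturally produces $L^2$-weighted quantities, whereas $f$ only lies in $L^p$, and extracting $\|f\|_{L^p}$ requires H\"older against $P_T^{p/(p-2)}$, whose local integrability holds exactly when $p>Q$. The $1+t^\ez$ factor in \eqref{3.1} (not present in \cite{ck,krs}) is essential for this calibration: without it, the contribution $-t^{-2}H(t) \sim -t^{-1}$ near $t=0$ would fail to be integrable, and $\ez\in(0,1)$ must be tuned in terms of $p-Q$ so that every time integral converges.
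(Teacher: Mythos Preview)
There is a genuine gap. The claimed pointwise bound $|J'(t)|\le Ct^{\dz-1}+C\ez t^{\ez-1}$ cannot be obtained, because it would require uniform control of $A(t)=T_t(|Dv|^2)(x_0)$, and any such bound with constant independent of $x_0$ is precisely the Lipschitz estimate you are trying to prove. Your proposed route to $F(t)-F(s)$ via Proposition~\ref{p2.1} compounds the problem: it would need $|Dv|^2$ to be an admissible test function (in particular $|Dv|^2\in N^{1,2}(X)$ and H\"older at $x_0$), which is unavailable. Likewise, using \eqref{1.1} only to produce $|B(s)|\le\lz A(s)+C_\lz s\,T_s(\phi^2f^2)(x_0)$ loses a fixed fraction $\lz A(s)$; after division by $t$ this leaves an uncontrolled $\frac{\lz}{t}A(t)$ in the lower bound that cannot be absorbed.

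The paper's argument never bounds $|J'(t)|$; it exploits the sign of $A(t)$. From the identity behind \eqref{3.5} one first gets $J(t)\le\frac{1+t^\ez}{2t}\int_X w^2(t,\cdot)p(t,x_0,\cdot)\,d\mu+\mathrm{error}$, and differentiating yields \eqref{3.8}, where $A(t)$ enters with a positive coefficient and is paired with $-\frac{1}{2t}\int w^2p$. One then argues by dichotomy: either $A(t)\ge\frac{1}{2t}\int w^2p$ and the bracket is nonnegative, or $A(t)<\frac{1}{2t}\int w^2p$, in which case \eqref{3.4} gives $A(t)\le CC(u,f)^2t^{\dz-1}$ while \eqref{1.1} gives $A(t)-\frac{1}{2t}\int w^2p\ge -CtA(t)$, so the product with $\frac{1+t^\ez}{t}$ is $\ge -CC(u,f)^2t^{\dz-1}$. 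The role of $t^\ez$ is also different from what you describe: it supplies a reserve term $\frac{\ez t^\ez}{t}A(t)$ that exactly cancels the $-\frac{\ez t^\ez}{2t^2}\int w^2p$ produced when Young's inequality is applied to the $f$-term, at the price of a factor $t^{-\ez}$ on $\int(\phi f)^2p$, integrable exactly when $\ez<1-Q/p$. Finally, your sketch implicitly assumes $\phi^2f^2\in L^{p/2}$ with $p/2>1$, i.e.\ $Q\ge2$; for $1\le Q<2$ the paper needs a separate and substantially longer argument, localizing to balls $B(x_0,t^\az)$ and invoking the Sobolev--Poincar\'e inequality \eqref{2.1}.
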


\begin{prop}\label{p3.3}
For almost every $x_0\in B(y_0,r)$ it holds $\lim_{t\to 0^+}J(t)=|Du(x_0)|.$
\end{prop}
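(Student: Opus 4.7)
The plan is to split $J(t)=I_1(t)+I_2(t)$ according to the two summands in \eqref{3.1} and to show, for $\mu$-a.e.\ $x_0\in B(y_0,r)$, that $I_1(t)\to |Du(x_0)|^2$ and $I_2(t)\to 0$ as $t\to 0^+$ (so that $J(t)$ has the claimed limit). The workhorse is a Lebesgue-type differentiation theorem along the heat semigroup: by the Gaussian upper bound \eqref{2.5} the operators $T_s$ are dominated pointwise by a constant multiple of the Hardy--Littlewood maximal function on the doubling space $(X,\mu)$, whence the standard weak-type $(1,1)$ argument gives, for every $g\in L^1_{\loc}(X)$,
$$T_sg(x_0)=\int_X g(x)p(s,x_0,x)\,d\mu(x)\longrightarrow g(x_0)\quad\text{as } s\to 0^+,$$
at every Lebesgue point $x_0$ of $g$. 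Averaging in $s$ transfers this to $\tfrac{1}{t}\int_0^tT_sg(x_0)\,ds\to g(x_0)$, which is the precise form I need for $J(t)$.

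For $I_1$, since $T_s(u\phi)(x_0)$ is a constant in $x$, $Dw(s,\cdot)=D(u\phi)(\cdot)$ is independent of $s$. By the Leibniz rule and the Caccioppoli inequality (Lemma \ref{l2.2}) applied on $B(x_0,2r)\subset\subset\Omega$, the function $|D(u\phi)|^2$ belongs to $L^1(X)$. Because $\phi\equiv 1$ on $B(x_0,r)$, the Cheeger derivative $D\phi$ vanishes in a neighborhood of $x_0$, so at Lebesgue points $x_0\in B(y_0,r)$ of $|Du|^2$ one has $D(u\phi)(x_0)=Du(x_0)$ and moreover $|D(u\phi)|^2$ and $|Du|^2$ coincide near $x_0$. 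Applying the differentiation theorem above then yields
$$I_1(t)=(1+t^\ez)\cdot\frac{1}{t}\int_0^tT_s\bigl(|D(u\phi)|^2\bigr)(x_0)\,ds\longrightarrow|Du(x_0)|^2.$$

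For $I_2$, the support of $\phi$ lies in $\Omega$, so by \eqref{1.3} the contribution of $\phi\,Au$ equals the contribution of $\phi f$; Lemma \ref{l2.1} shows $u\phi\in L^\infty(X)$, so both $u\phi^2f$ and $\phi f$ lie in $L^1(X)$, and $u\phi\in L^1(X)$. Decomposing $w(s,x)=u(x)\phi(x)-T_s(u\phi)(x_0)$ splits the inner integrand into
$$\int_X u\phi^2 f\,p(s,x_0,\cdot)\,d\mu\;-\;T_s(u\phi)(x_0)\int_X \phi f\,p(s,x_0,\cdot)\,d\mu.$$
At a.e.\ $x_0$ the first integral converges to $u(x_0)\phi(x_0)^2f(x_0)$ while the two factors in the second tend to $u(x_0)\phi(x_0)$ and $\phi(x_0)f(x_0)$; since $\phi(x_0)=1$ both products equal $u(x_0)f(x_0)$ and their difference vanishes. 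The outer Ces\`aro average together with $(1+t^\ez)\to 1$ then gives $I_2(t)\to 0$.

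The main obstacle is ensuring that the a.e.\ conclusion is genuinely with respect to the underlying measure $\mu$ and independent of the auxiliary cutoff $\phi$. This is handled by observing that all the $L^1$ data feeding the differentiation step, namely $|Du|^2$, $uf$ and $f$, are intrinsic to $u$ and to the equation $\Delta u=f$ and do not involve $\phi$; restricting $x_0$ to the (conull) intersection of their Lebesgue sets, and exploiting that the standard choice of $\phi$ automatically satisfies $\phi(x_0)=1$, $D\phi(x_0)=0$ together with the local identity $|D(u\phi)|^2\equiv|Du|^2$ near $x_0$, produces a single $\mu$-null exceptional set outside of which all the required limits hold simultaneously.
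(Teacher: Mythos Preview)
Your argument is correct. For $I_1$ you do exactly what the paper does: use $Dw=D(u\phi)$ and the Lebesgue differentiation property $T_s(|D(u\phi)|^2)(x_0)\to |Du(x_0)|^2$. For $I_2$, however, your route differs from the paper's. The paper first invokes the H\"older estimate \eqref{3.4} for $w$ (which in turn rests on the interior H\"older regularity of $u$) to get a pointwise bound $|w(s,x)|\le CC(u,f)(d(x,x_0)^\dz+s^{\dz/2})$, and then shows that the resulting upper bound $\frac{1}{t}\int_0^t s^{\dz/2}T_{ls}(|\phi f|)(x_0)\,ds$ tends to $0$. You bypass this entirely by the algebraic splitting
\[
\int_X w\phi f\,p\,d\mu=T_s(u\phi^2 f)(x_0)-T_s(u\phi)(x_0)\,T_s(\phi f)(x_0),
\]
and then let each factor converge separately at Lebesgue points; the two limits cancel. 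Your approach is more elementary in that it needs only the Gaussian upper bound \eqref{2.5} and Lebesgue differentiation, not the H\"older continuity of $u$. The paper's approach, on the other hand, gives a quantitative rate in $s$ (the factor $s^{\dz/2}$), which is not needed here but is consistent with the estimates used elsewhere in the paper.

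Two small remarks. First, when you list ``$|Du|^2$, $uf$ and $f$'' as the intrinsic $L^1$ data whose Lebesgue points determine the exceptional set, you should also include $u$ itself, since the limit $T_s(u\phi)(x_0)\to u(x_0)$ is one of the ingredients in your $I_2$ argument; this does not affect the conull conclusion. Second, the passage from $\phi Au$ to $\phi f$ needs the test function $w(s,\cdot)\phi\, p(s,x_0,\cdot)$ to lie in $N_0^{1,2}(B(x_0,2r))$, which is exactly \eqref{3.x3}; citing \eqref{1.3} alone is a bit elliptic, but the justification is available from Lemma \ref{l2.4} and the boundedness of $w,\phi,p$.
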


Combining Propositions \ref{p3.1}, \ref{p3.2} and \ref{p3.3},
we finally obtain
\begin{eqnarray*}
|D(u)(x_0)|^2&&=J(T)-\int_0^T \frac{\,d}{\,dt}J(t)\,dt\\
&&\le C_{T,r}(\|u\|_{L^2(B(x_0,4r))}
+\|f\|_{L^p(B(x_0,4r))})^2,
\end{eqnarray*}
for almost every $x_0\in B(y_0,r)$.

Now for all $x,y\in B(y_0,r)\subseteq \Omega$ with $B(y_0,6r)\subset\subset\Omega$,
let $B_1=B(x,d(x,y))$ and $B_{-1}=B(y,d(x,y))$. For $j\ge 1$,
set $B_j=2^{-1}B_{i-1}$ and $B_{-j}=2^{-1}B_{-j+1}$ inductively.
Further, if $x,y$ are Lebesgue points of $u$, then
$$|u(x)-u(y)|\le \sum_{j=-\fz}^{\fz}|u_{B_j}-u_{B_{j+1}}|,$$
where for each $j$, the Poincar\'e inequality yields
\begin{eqnarray*}
|u_{B_j}-u_{B_{j+1}}|&&\le C \mathrm{diam}(2B_j)
\lf(\fint_{2B_j} |Du(x)|^2\,d\mu(x)\r)^{1/2}\\
&&\le C_{T,r}\mathrm{diam}(2B_j)(\|u\|_{L^2(B(y_0,6r))}
+\|f\|_{L^p(B(y_0,6r))}).
\end{eqnarray*}
Hence, we obtain
\begin{eqnarray*}
|u(x)-u(y)|\le C_{T,r}(\|u\|_{L^2(B(y_0,6r))}
+\|f\|_{L^p(B(y_0,6r))})d(x,y),
\end{eqnarray*}
for almost all $x,y\in B(y_0,r)$.  Then
$u$ can be extended to a locally Lipschitz continuous function on $\Omega$, which completes
the proof of Theorem \ref{t1}.
\end{proof}

Let us prove the Propositions.

\begin{proof}[Proof of Proposition \ref{p3.1}]
Since $w(t,x)=u(x)\phi(x)-T_t(u\phi)(x_0),$ we have
$$|D(u\phi)|^2=|Dw|^2=\frac 12 A w^2-w(u A\phi+\phi Au+2Du\cdot D\phi)$$
in the weak sense of measures. Also, in what follows we extend $A$ formally
to all of $N^{1,2}(X)$ by defining
\begin{equation}\label{3.2}
\int_X v(x) Au(x)\,d\mu(x)=-\int_X Dv(x)\cdot Du(x)\,d\mu(x)=\int_X Av(x) u(x)\,d\mu(x).
\end{equation}

Moreover, we set $m(t)=T_t(u\phi)(x_0)$. Then
$\frac{\pa}{\pa t}w^2=2w\frac{\pa}{\pa t}w=-2wm'(t)$, which further implies that
$$|Dw|^2=\frac 12 \lf(A+\frac{\pa}{\pa t}\r)w^2-w(u A\phi+\phi Au
+2Du\cdot D\phi-m'(t))$$
in the weak sense of measures.
Thus, we obtain
\begin{eqnarray}\label{3.3}
&&\int_0^t\int_X [|Dw(s,x)|^2+w(s,x)\phi(x)Au(x)] p(s,x_0,x)\,d\mu(x)\,ds\nonumber\\
&&\hs =\frac 12 \int_0^t\int_X \lf(A+\frac{\pa}{\pa s}\r)
w^2(s,x)p(s,x_0,x)\,d\mu(x)\,ds\nonumber\\
&&\hs\hs-\int_0^t\int_X w(s,x)[u(x) A\phi(x)+
2Du(x)\cdot D\phi(x) -m'(s)]p(s,x_0,x)\,d\mu(x)\,ds.
\end{eqnarray}
Recall that for each $s>0$ and $x_0\in X$, $T_s(1)(x_0)=\int_X p(s,x_0,x)\,d\mu(x)=1$.
We then have
\begin{eqnarray*}
&&\int_0^t\int_X w(s,x)m'(s)p(s,x_0,x)\,d\mu(x)\,ds
=\int_0^tm'(s)T_s(u\phi)(x_0)(1-T_s(1)(x_0))\,ds=0.
\end{eqnarray*}

Now applying integration by parts and using \eqref{3.2}, we obtain
\begin{eqnarray*}
&&\int_0^t\int_X \lf(A+\frac{\pa}{\pa s}\r)
w^2(s,x)p(s,x_0,x)\,d\mu(x)\,ds\\
&&\hs =\int_0^t\int_X w^2(s,x)\lf(A-\frac{\pa}{\pa s}\r)p(s,x_0,x)\,d\mu(x)\\
&&\hs\hs +\int_X w^2(t,x)p(t,x_0,x)\,d\mu(x)-\int_X w^2(0,x)p(0,x_0,x)\,d\mu(x).
\end{eqnarray*}
At this point, we want to use Proposition \ref{p2.1} with $\phi(t,x)=w^2(t,x)$.
By Lemma \ref{l2.1} and $u\in N^{1,2}_{\loc}(X)$,
we have $D_x(w^2(t,x))=2w(t,x)D(u\phi)(x)\in L^2([0,T]\times X)$.
The property (6) of our semigroup guarantees the continuity of $w^2(t,x)$ on $[0,T]$.
Since $w^2(t,x)$ may equal to a constant outside $B(x_0,2r)$, it may
not be in $L^2(X)$, but we always have $D(w^2(t,\cdot))\in L^2(X)$ which is enough
for us to use Proposition \ref{p2.1}. The only thing left is to verify the H\"older
continuity of $w^2(t,x)$.

For $p>Q\ge 2$, by \cite[Theorem 5.13]{bm},
we see that $u$ is locally H\"older continuous; while
for $1\le Q<2$, since $u\phi\in N^{1,2}(X)$, by \cite[Theorem 5.1]{sh},
$u\phi$ is H\"older continuous with exponent $1-\frac Q2$.
More precisely, for almost all $x,y\in B$, we have
$$|u(x)-u(y)|\le C_r[\|u\|_{L^2(B(x_0,4r))}+\|g\|_{L^p(B(x_0,4r))}]d(x,y)^\dz.$$
Notice here that, when $Q< 2$, $\dz=1-\frac Q2$.
In what follows, for simplicity, we define
$$C(u,f):=\|u\|_{L^2(B(x_0,4r))}+\|f\|_{L^p(B(x_0,4r))}.$$

In the following proof, we will repeatedly use the fact
that for fixed $\bz,\gz\in (0,\fz)$, $t^{\bz}e^{-t^\gz}$ and $t^{-\bz}e^{-t^{-\gz}}$
are bounded on $(0,\fz)$.
Let us now show that \eqref{2.7} holds for $w$. By the local H\"older continuity of $u\phi$
and \eqref{2.5}, we have
\begin{eqnarray}\label{3.4}
&&|w(t,x)|=|u(x)\phi(x)-T_t(u\phi)(x_0)|\nonumber\\
&&\hs=|u(x)\phi(x)-u(x_0)\phi(x_0)+u(x_0)\phi(x_0)-T_t(u\phi)(x_0)|\nonumber\\
&&\hs \le CC(u,f)d(x,x_0)^\dz+\int_X |u(x_0)\phi(x_0)-u(x)\phi(x)|p(t,x_0,x)\,d\mu(x)\nonumber\\
&&\hs \le CC(u,f)\lf\{d(x,x_0)^\dz+\int_X d(x,x_0)^\dz t^{-\frac Q2}
e^{-\frac{d(x,x_0)^2}{2C_1t}} e^{-\frac{d(x,x_0)^2}{2C_1t}}\,d\mu(x)\r\}\nonumber\\
&&\hs \le CC(u,f)\lf\{d(x,x_0)^\dz+t^{\dz/2}\int_X  p(lt,x_0,x)\,d\mu(x)\r\}\nonumber\\
&&\hs \le CC(u,f)(d(x,x_0)^\dz+t^{\dz/2}),
\end{eqnarray}
where $l=\frac{2C_1}{C_2}$. Now let $\gz\in (0,T]$ and $(t,x)\in [0,\gz]\times B(x_0,\gz)$. Then
by \eqref{3.4} and the fact that $w(0,x_0)=0$, we see that
\begin{eqnarray*}
|w(t,x)-w(0,x_0)|\le CC(u,f)(d(x,x_0)^\dz+t^{\dz/2})\le CC(u,f)\gz^{\dz/2}.
\end{eqnarray*}

Thus, this allows us to use Proposition \ref{p2.1} to obtain
\begin{eqnarray*}
&&\int_0^t\int_X \lf(A+\frac{\pa}{\pa s}\r)
w^2(s,x)p(s,x_0,x)\,d\mu(x)\,ds\\
&&\hs=Kw^2(0,x_0)+\int_X w^2(t,x)p(t,x_0,x)\,d\mu(x)-\int_X w^2(0,x)p(0,x_0,x)\,d\mu(x)\nonumber.
\end{eqnarray*}
Using \eqref{2.6} gives
\begin{eqnarray*}
\int_X w^2(0,x)p(0,x_0,x)\,d\mu(x)&&=\lim_{s\to 0^+}\int_X w^2(s,x)p(s,x_0,x)\,d\mu(x)\\
&&=\lim_{s\to 0^+} [T_s((u\phi)^2)(x_0)-(T_s(u\phi)(x_0))^2]=0,
\end{eqnarray*}
which together with the fact $w(0,x_0)=0$ yields
\begin{eqnarray}\label{3.5}
&&\int_0^t\int_X \lf(A+\frac{\pa}{\pa s}\r)
w^2(s,x)p(s,x_0,x)\,d\mu(x)\,ds=\int_X w^2(t,x)p(t,x_0,x)\,d\mu(x).
\end{eqnarray}

We now estimate the second term in \eqref{3.3}. Recall that
$\phi=1$ on $B(x_0,r)$ and $\supp \phi \subseteq B(x_0,2r)$.
By Lemma \ref{l2.1}, Lemma \ref{l2.2}, Lemma \ref{l2.5} and the H\"older
inequality, we obtain
\begin{eqnarray}\label{3.x1}
&&\lf|\int_0^t\int_X w(s,x)u(x) A\phi(x)p(s,x_0,x)\,d\mu(x)\,ds\r|\nonumber\\
&&\hs=\lf|\int_0^t\int_X D\lf(w(s,x)u(x)p(s,x_0,x)\r)\cdot D\phi(x)
\,d\mu(x)\,ds\r|\nonumber\\
&&\hs\le\lf|\int_0^t\int_X w(s,x)u(x)Dp(s,x_0,x)\cdot D\phi(x)
\,d\mu(x)\,ds\r|\nonumber\\
&&\hs\hs+\lf|\int_0^t\int_X w(s,x)p(s,x_0,x)Du(x)\cdot D\phi(x)
\,d\mu(x)\,ds\r|\nonumber\\
&&\hs\hs+\lf|\int_0^t\int_X u(x)p(s,x_0,x)Dw(s,x)\cdot D\phi(x)
\,d\mu(x)\,ds\r|\nonumber\\
&&\hs\le C_r\|u\|_{L^\fz(B(x_0,2r))}^2
\lf(\int_0^t\int_{B(x_0,2r)\setminus B(x_0,r)} |Dp(s,x_0,x)|^2
\,d\mu(x)\,ds\r)^{1/2}\nonumber\\
&&\hs\hs+C_r e^{-Ct^{-\frac 13}}\|u\|_{L^\fz(B(x_0,2r))}
\lf(\int_{B(x_0,2r)\setminus B(x_0,r)} |Du(x)|^2\,d\mu(x)\r)^{1/2}\nonumber\\
%&&\hs\le C_r e^{-Ct^{-\frac 13}}\|u\|_{L^\fz(B(x_0,2r))}\bigg[
%\|u\|_{L^\fz(B(x_0,2r))}\nonumber\\
%&&\hs\hs +\|f\|^{1/2}_{L^p(B(x_0,4r))}\|u\|^{1/2}_{L^2(B(x_0,4r))}
%+\|u\|_{L^2(B(x_0,4r))}\bigg]\nonumber\\
&&\hs\le C_r e^{-Ct^{-\frac 13}}C(u,f)^2.
\end{eqnarray}
Similarly we have
\begin{eqnarray}\label{3.6}
&&\lf|\int_0^t\int_X w(s,x)p(s,x_0,x)Du(x)\cdot D\phi(x)\,d\mu(x)\,ds\r|\le C_r C(u,f)^2 e^{-Ct^{-\frac 13}}.
\end{eqnarray}

Combining the estimates \eqref{3.5}-\eqref{3.6}, by \eqref{3.3}, we obtain that $t\in (0,T]$,
\begin{eqnarray}\label{3.x2}
J(t)&&\le \frac{1+t^\ez}{2t}\lf|\int_0^t\int_X \lf(A+\frac{\pa}{\pa s}\r)
w^2(s,x)p(s,x_0,x)\,d\mu(x)\,ds\r|\nonumber\\
&&\hs+\frac{1+t^\ez}{t}\lf|\int_0^t\int_X w(s,x)[u(x) A\phi(x)
+2Du(x)\cdot D\phi(x)p(s,x_0,x)\,d\mu(x)\,ds\r|\nonumber\\
&&\le \frac{1+t^\ez}{2t}\int_X w^2(t,x)p(t,x_0,x)\,d\mu(x)
+C_re^{-Ct^{-\frac 13}}C(u,f)^2.
\end{eqnarray}
By letting $t=T$ and using Lemma \ref{l2.1}, we obtain
\begin{eqnarray*}
  J(T)\le C_T \|u\phi\|^2_{L^\fz(X)}+C_{T,r}C(u,f)^2\le C_{T,r}C(u,f)^2,
\end{eqnarray*}
which is the desired estimate, and hence completes the proof of Proposition \ref{p3.1}.
\end{proof}

\begin{proof}[Proof of Proposition \ref{p3.2}]
Let us now estimate the derivative $J'(t)=\frac{\,d}{\,dt}J(t)$.
By Lemma \ref{l2.4}, for almost every $x_0\in X$,
$D_yp(s,x_0,\cdot)\in L^2(X)$. From this together with the fact that
for each fixed $s$, $w(s,\cdot),\phi, p(s,x_0,\cdot)$ are bounded functions,
we see that $w\phi p\in N_0^{1,2}(B(x_0,2r))$. Thus by \eqref{3.2}, we obtain
\begin{eqnarray}\label{3.x3}
&&\int_0^t\int_X w(s,x)\phi(x)Au(x)p(s,x_0,x)\,d\mu(x)\nonumber\\
&&\hs=-\int_0^t\int_X D(w(s,\cdot)\phi p(s,x_0,\cdot))(x)\cdot Du(x)\,d\mu(x)\nonumber\\
&&\hs=\int_0^t\int_X w(s,x)\phi(x)f(x)p(s,x_0,x)\,d\mu(x).
\end{eqnarray}
This and \eqref{3.x2} further imply that
\begin{eqnarray}\label{3.8}
\frac{\,d}{\,dt}J(t)&&=\lf(-\frac 1 {t^2}-\frac {1-\ez}{t^{2-\ez}}\r)
\frac{t}{1+t^\ez}J(t)+\frac{1+t^\ez}{t}
\int_X |Dw(t,x)|^2 p(t,x_0,x)\,d\mu(x)\nonumber\\
&&\hs+\frac{1+t^\ez}{t} \int_X w(t,x)\phi(x) f(x)p(t,x_0,x)\,d\mu(x)\nonumber\\
&&\ge \frac{1+(1-\ez)t^\ez}{t}\lf(\int_X |Dw(t,x)|^2 p(t,x_0,x)\,d\mu(x)-
\frac 1{2t}\int_X w^2(t,x)p(t,x_0,x)\,d\mu(x)\r)\nonumber\\
&&\hs-C_r e^{-Ct^{-\frac 13}}C(u,f)^2+\frac{\ez t^\ez}{t}
\int_X |Dw(t,x)|^2 p(t,x_0,x)\,d\mu(x)\nonumber\\
&&\hs+\frac{1+t^\ez}{t} \int_X w(t,x)\phi(x) f(x)p(t,x_0,x)\,d\mu(x).
\end{eqnarray}

The main difficulty left is to estimate the last term in \eqref{3.8}.
To this end, we now decompose our proof into two different
cases: (i) $Q\ge 2$ and (ii) $Q\in [1,2)$. From the following proof,
we will see that the term $t^\ez$ in \eqref{3.1} plays a key role in
matching the two terms $\int_X |Dw(t,x)|^2 p(t,x_0,x)\,d\mu(x)$ and
$\frac 1{2t}\int_X w^2(t,x)p(t,x_0,x)\,d\mu(x)$, which allows us to use \eqref{1.1}.

{\bf Case (i) $Q\ge 2$.} Recall that $\ez\in (0,1)$.
Applying the Young inequality to the last term in \eqref{3.8}
and choosing suitable constants, we obtain
\begin{eqnarray*}
\frac{\,d}{\,dt}J(t)&&\ge \frac{1+(1-\ez)t^\ez}{t}\lf(\int_X |Dw(t,x)|^2 p(t,x_0,x)\,d\mu(x)-
\frac 1{2t}\int_X w^2(t,x)p(t,x_0,x)\,d\mu(x)\r)\nonumber\\
&&\hs-C_r e^{-Ct^{-\frac 13}}C(u,f)+\frac{\ez t^\ez}{t}
\int_X |Dw(t,x)|^2 p(t,x_0,x)\,d\mu(x)\nonumber\\
&&\hs-\frac{\ez t^\ez}{2t^2} \int_X w^2(t,x)p(t,x_0,x)\,d\mu(x)
-\frac{C_{T,\ez}}{t^\ez}\int_X (\phi(x)f(x))^2p(t,x_0,x)\,d\mu(x)\nonumber\\
&&\ge \frac{1+t^\ez}{t}\lf(\int_X |Dw(t,x)|^2 p(t,x_0,x)\,d\mu(x)-
\frac 1{2t}\int_X w^2(t,x)p(t,x_0,x)\,d\mu(x)\r)\nonumber\\
&&\hs-C_r e^{-Ct^{-\frac 13}}C(u,f)
-\frac{C_{T,\ez}}{t^\ez}\int_X (\phi(x)f(x))^2p(t,x_0,x)\,d\mu(x).
\end{eqnarray*}

For each fixed $t\in (0,T)$, either
$$\int_X |Dw(t,x)|^2 p(t,x_0,x)\,d\mu(x)\ge
\frac 1{2t}\int_X w^2(t,x)p(t,x_0,x)\,d\mu(x)$$
or
$$\int_X |Dw(t,x)|^2 p(t,x_0,x)\,d\mu(x)<
\frac 1{2t}\int_X w^2(t,x)p(t,x_0,x)\,d\mu(x).$$
In the first case, we have
\begin{eqnarray}\label{3.9}
\frac{\,d}{\,dt}J(t)\ge -C_r e^{-Ct^{-\frac 13}}C(u,f)
-\frac{C_{T,\ez}}{t^\ez}\int_X (\phi(x)f(x))^2p(t,x_0,x)\,d\mu(x).
\end{eqnarray}
Let us consider the second case. By \eqref{2.7}, \eqref{3.4} and the fact that
$\frac{d(x,x_0)^\dz}{t^{\dz/2}}e^{-\frac{|x-x_0|^2}{2C_1t}}$ is bounded, we obtain
\begin{eqnarray*}
&&\int_X |Dw(t,x)|^2 p(t,x_0,x)\,d\mu(x)< \frac 1{2t}\int_X w^2(t,x)p(t,x_0,x)\,d\mu(x)\nonumber\\
&&\hs\le CC(u,f)^2\frac 1{2t}\int_{X} (d(x,x_0)^\dz+t^{\dz/2})^2t^{-Q/2}
e^{-\frac{|x-x_0|^2}{C_1t}}\,d\mu(x)\\
&&\hs \le CC(u,f)^2t^{\dz-1}\int_{X} p(lt,x_0,x)\,d\mu(x)\le CC(u,f)^2t^{\dz-1},\nonumber
\end{eqnarray*}
where $l=\frac{C_1}{2C_2}$. The fact $w=u\phi-T_t(u\phi)(x_0)$ implies
\begin{eqnarray*}
\int_X w^2(t,x)p(t,x_0,x)\,d\mu(x)&&=\int_X (u(x)\phi(x))^2p(t,x_0,x)\,d\mu(x)\\
&&\hs -\lf(\int_X u(x)\phi(x)p(t,x_0,x)\,d\mu(x)\r)^2.
\end{eqnarray*}
Then, by \eqref{1.1} with $g$ replaced by $u\phi$, we obtain
\begin{eqnarray}\label{3.10}
\frac{\,d}{\,dt}J(t)&&\ge -C(1+t^\ez)\int_X |Dw(t,x)|^2 p(t,x_0,x)\,d\mu(x)\nonumber\\
&&\hs-C_r e^{-Ct^{-\frac 13}}C(u,f)
-\frac{C_T}{t^\ez}\int_X (\phi(x)f(x))^2p(t,x_0,x)\,d\mu(x)\nonumber\\
&&\ge -C_{T,r}C(u,f)^2t^{\dz-1}
-\frac{C_{T,\ez}}{t^\ez}\int_X (\phi(x)f(x))^2p(t,x_0,x)\,d\mu(x).
\end{eqnarray}
Thus, from \eqref{3.9} and \eqref{3.10},
we see that \eqref{3.10} holds in both cases.

Since $p>Q\ge 2$, we may choose $\ez\in (0,1)$
such that $Q/p+\ez<1$. This together with the H\"older inequality
implies that
\begin{eqnarray*}
&&\int_0^T \frac{\,d}{\,dt}J(t)\,dt\nonumber\\
&&\hs\ge
-C_{T,r}C(u,f)^2\int_0^T t^{\dz-1}\,dt-\int_0^T\frac{C_{T}}{t^\ez}\int_X (\phi(x) f(x))^2
p(t,x_0,x)\,d\mu(x)\,dt\nonumber\\
&&\hs\ge -C_{T,r}C(u,f)^2\lf[1+\int_0^T
t^{-\ez}\lf(\int_X p(t,x_0,x)^{\frac{p}{p-2}}\,d\mu(x)\r)^{1-\frac 2p}\,dt\r]\nonumber\\
&&\hs\ge -C_{T,r}C(u,f)^2\int_0^T
t^{-\ez-\frac Qp}\lf(\int_X p(t,x_0,x)\,d\mu(x)\r)^{1-\frac 2p}\,dt\nonumber\\
&&\hs\ge -C_{T,r}C(u,f)^2,
\end{eqnarray*}
which completes the proof of the case $Q\in [2,\fz).$

{\bf Case (ii) $Q\in [1,2).$}
Let us first estimate the last term in \eqref{3.8}. Let $\az\in (0,\frac 12)$
be fixed in what follows. Choose $\psi_t(x)$ to be a Lipschitz function on $X$
such that $\psi_t(x)\equiv 1$ on
$B(x_0,t^\az)$, $\supp \psi_t\subseteq B(x_0,2t^\az)$ and $|D\phi|\le Ct^{-\az}$. Write
\begin{eqnarray*}
&&\frac{1+t^\ez}{t} \lf|\int_X w(t,x)\phi(x) f(x)p(t,x_0,x)\,d\mu(x)\r|\\
&&\hs\le \frac{C_T}{t} \int_{B(x_0,t^\az)} \lf|\psi_t(x) w(t,x)\phi(x) f(x)p(t,x_0,x)\r|\,d\mu(x)\nonumber\\
&&\hs\hs+\frac{C_T}{t} \int_{(B(x_0,t^\az))^\com} \lf|w(t,x)\phi(x) f(x)p(t,x_0,x)\r|\,d\mu(x)\nonumber\\
&&\hs =: \mathrm{H}_1+\mathrm{H}_2.\nonumber
\end{eqnarray*}

Since $\az\in (0,\frac 12)$, $t^{-1-\frac Q2}e^{-\frac{t^{2\az-1}}{2C_1}}$
is bounded on $(0,\fz)$. This, together with the H\"older inequality and \eqref{2.5} yields
\begin{eqnarray*}
\mathrm{H}_2&&\le \frac{C_T}{t}\|u\|_{L^\fz(B(x_0,2r))}
\int_{(B(x_0,t^\az))^\com} |\phi(x)f(x)|t^{-\frac Q2}e^{-\frac{d(x,x_0)^2}{C_1t}}\,d\mu(x)\\
&&\le C_{T,r}e^{-ct^{2\az-1}}C(u,f)^2.\nonumber
\end{eqnarray*}

Let us estimate the term $\mathrm{H}_1$. Let $1<s<\min\{2,p\}$ and let $s'$
be the conjugate of $s$, i.e., $\frac 1s+\frac 1{s'}=1$. By the H\"older inequality,
we have
\begin{eqnarray}\label{3.11}
\mathrm{H}_1&&\le \frac{C_T}{t} \|f\phi\|_{L^{s}(B(x_0,t^\az))}
\|\psi_t w(t,\cdot)p(t,x_0,\cdot)\|_{L^{s'}(B(x_0,t^\az))}\nonumber\\
&&\le \frac{C_T}{t} \|f\|_{L^{p}(B(x_0,t^\az))}\mu(B(x_0,t^\az)))^{\frac{1}{s}-\frac 1p}
\|\psi_t w(t,\cdot)p(t,x_0,\cdot)\|_{L^{s'}(B(x_0,t^\az))}.
\end{eqnarray}
Notice that for each $t$, $\psi_t w(t,\cdot)p(t,x_0,\cdot)\in N^{1,2}_0(B(x_0,2t^\az))$. Then by
the Sobolev-Poincar\'e inequality \eqref{2.1}, we obtain
\begin{eqnarray}\label{3.12}
&&\|\psi_t w(t,\cdot) p(t,x_0,\cdot)\|_{L^{s'}(B(x_0,2t^\az))}\nonumber\\
&&\hs\le 2t^{\az}\mu(B(x_0,2t^\az)))^{\frac 1{s'}-\frac 12}\|D(\psi_t w(t,\cdot)
p(t,x_0,\cdot))\|_{L^2(B(x_0,2t^\az))}.
\end{eqnarray}
Let us estimate $\|D(\psi_t w(t,\cdot)p(t,x_0,\cdot))\|_{L^2(B(x_0,2t^\az))}$. Applying the Leibniz rule,
the Gaussian bounds of heat kernel \eqref{2.5}, \eqref{2.6} and \eqref{3.4},  we obtain
\begin{eqnarray}\label{3.13}
&&\|D(\psi_t w(t,\cdot)p(t,x_0,\cdot))\|_{L^2(B(x_0,2t^\az))}\nonumber\\
&&\hs\le C\|t^{-\az}w(t,\cdot) p(t,x_0,\cdot)\|_{L^2({B(x_0,2t^\az)})}
+\||Dw(t,\cdot)| p(t,x_0,\cdot)\|_{L^2({B(x_0,2t^\az)})}\nonumber\\
&&\hs\hs+C_r\|u\|_{L^\fz(B(x_0,2r))}\|t^{\az(1-\frac Q2)} D_yp(t,x_0,\cdot)\|_{L^2({B(x_0,2t^\az)})}\nonumber\\
&&\hs\le Ct^{-\az-\frac Q4}\lf(\int_{B(x_0,2t^\az)}w^2(t,x) p(t,x_0,x)\,d\mu(x)\r)^{1/2}\nonumber\\
&&\hs\hs+Ct^{-\frac Q4}\lf(\int_{B(x_0,2t^\az)}|Dw(t,x)|^2 p(t,x_0,x)\,d\mu(x)\r)^{1/2}\nonumber\\
&&\hs\hs+C_rC(u,f)t^{\az(1-\frac Q2)}\|D_yp(t,x_0,\cdot)\|_{L^2(B(x_0,2t^\az))}.
\end{eqnarray}

Combining the estimates \eqref{3.11}-\eqref{3.13}, by using the Young inequality, we obtain
\begin{eqnarray*}
\mathrm{H}_1&&\le C_{T,r} t^{\az-1+\az Q(\frac 12-\frac 1p)}\|f\|_{L^{p}(B(x_0,t^\az))}
\bigg\{t^{-\az-\frac Q4}\lf(\int_{B(x_0,2t^\az)}w^2(t,x) p(t,x_0,x)\,d\mu(x)\r)^{1/2}\\
&&\hs+t^{-\frac Q4}\lf(\int_{B(x_0,2t^\az)}|Dw(t,x)|^2 p(t,x_0,x)\,d\mu(x)\r)^{1/2}\\
&&\hs+C(u,f)t^{\az(1-\frac Q2)}\|D_yp(t,x_0,\cdot)\|_{L^2(B(x_0,2t^\az))}\bigg\}\\
&&\le C_Tt^{2\az Q(\frac 12-\frac 1p)-\frac Q2-\ez}\|f\|^2_{L^p(B(x_0,2r))}
+\frac{\ez t^{\ez}}{4t^2}\int_{B(x_0,2t^\az)}|w^2(t,x)| p(t,x_0,x)\,d\mu(x)\\
&&\hs+ C_T t^{2\az-1+2\az Q(\frac 12-\frac 1p)-\frac Q2-\ez}\|f\|^2_{L^p(B(x_0,2r))}
+\frac{\ez t^{\ez}}{2t}\int_{B(x_0,2t^\az)}|Dw(t,x)|^2 p(t,x_0,x)\,d\mu(x)\\
&&\hs+C_{T,r} t^{2\az-2+2\az Q(\frac 12-\frac 1p)+2\az(1-\frac Q2)}\|f\|^2_{L^p(B(x_0,2r))}
+C(u,f)^2\|D_yp(t,x_0,\cdot)\|^2_{L^2(B(x_0,2t^\az))}\\
&&=:C_Tt^{g_1(\az,Q,p)-\ez}\|f\|^2_{L^p(B(x_0,2r))}
+\frac{\ez t^{\ez}}{4t^2}\int_{B(x_0,2t^\az)}|w^2(t,x)| p(t,x_0,x)\,d\mu(x)\\
&&\hs+ C_T t^{g_2(\az,Q,p)-\ez}\|f\|^2_{L^p(B(x_0,2r))}
+\frac{\ez t^{\ez}}{2t}\int_{B(x_0,2t^\az)}|Dw(t,x)|^2 p(t,x_0,x)\,d\mu(x)\\
&&\hs+C_{T,r} t^{g_3(\az,Q,p)}\|f\|^2_{L^p(B(x_0,2r))}
+C(u,f)^2\|D_yp(t,x_0,\cdot)\|^2_{L^2(B(x_0,2t^\az))}.
\end{eqnarray*}
Since $p>Q$, we have
$$\min_{1\le i\le 3}\lf\{g_i\lf(1/2,Q,p\r)\r\}>-1.$$
Since each $g_i$ is a continuous function of $\az$, there exists $\az\in (\frac 13,\frac 12)$ such that
$$\min_{1\le i\le 3}\lf\{g_i\lf(\az,Q,p\r)\r\}>-1.$$
Fix such an $\az$ and choose
$$\ez\in \lf(0,\frac 12 +\frac 12\min_{1\le i\le 3}\lf\{g_i\lf(\az,Q,p\r)\r\}\r).$$
Then the above estimate reduces to
\begin{eqnarray*}
\mathrm{H}_1&&\le C_{T,r}t^{\ez-1}C(u,f)^2
+\frac{\ez t^{\ez}}{4t^2}\int_{B(x_0,2t^\az)}|w^2(t,x)| p(t,x_0,x)\,d\mu(x)\\
&&\hs+\frac{\ez t^{\ez}}{2t}\int_{B(x_0,2t^\az)}|Dw(t,x)|^2 p(t,x_0,x)\,d\mu(x)
+C(u,f)^2\|D_yp(t,x_0,\cdot)\|^2_{L^2(B(x_0,2t^\az))}.
\end{eqnarray*}

Notice that for fixed $\bz,\gz\in (0,\fz)$, $t^{-\bz}e^{-t^{-\gz}}$
is bounded on $(0,\fz)$.
Applying this and the estimates of $\mathrm{H}_1$ and $\mathrm{H}_2$ to \eqref{3.8} yields
\begin{eqnarray*}
&&\frac{\,d}{\,dt}J(t)\nonumber\\
%&&\hs\ge \frac{1+(1-\ez)t^\ez}{t}\lf(\int_X |Dw(t,x)|^2 p(t,x_0,x)\,d\mu(x)-
%\frac 1{2t}\int_X w^2(t,x)p(t,x_0,x)\,d\mu(x)\r)\nonumber\\
%&&\hs\hs-C_r e^{-Ct^{-\frac 13}}C(u,f)^2+\frac{\ez t^\ez}{t}
%\int_X |Dw(t,x)|^2 p(t,x_0,x)\,d\mu(x)\nonumber\\
%&&\hs\hs-C_{T,r}e^{-ct^{2\az-1}}C(u,f)^2-C_{T,r}t^{\ez-1}C(u,f)^2
%-\frac{\ez t^{\ez}}{4t^2}\int_{B(x_0,2t^\az)}|w^2(t,x)| p(t,x_0,x)\,d\mu(x)\nonumber\\
%&&\hs\hs-\frac{\ez t^{\ez}}{2t}\int_{B(x_0,2t^\az)}|Dw(t,x)|^2 p(t,x_0,x)\,d\mu(x)
%-\|D_yp(t,x_0,\cdot)\|^2_{L^2(B(x_0,2t^\az))}\nonumber\\
&&\hs\ge \frac{1+(1-\frac \ez 2)t^\ez}{t}\lf(\int_X |Dw(t,x)|^2 p(t,x_0,x)\,d\mu(x)-
\frac 1{2t}\int_X w^2(t,x)p(t,x_0,x)\,d\mu(x)\r)\nonumber\\
&&\hs\hs-C_{T,r}C(u,f)^2\lf[t^{\ez-1}-\|D_yp(t,x_0,\cdot)\|^2_{L^2(B(x_0,2t^\az))}\r].
\end{eqnarray*}
The estimates \eqref{3.9}-\eqref{3.10} simplify the above estimate as
\begin{eqnarray*}
&&\frac{\,d}{\,dt}J(t)\ge -C_{T,r}C(u,f)^2
[t^{\frac12(1-\frac Q2)-1}+t^{\ez-1}]-C(u,f)^2\|D_yp(t,x_0,\cdot)\|^2_{L^2(B(x_0,2t^\az))}.
\end{eqnarray*}
Integrating over $t$ on $[0,T]$ we conclude that
\begin{eqnarray}
\int_0^T J'(t)\,dt&&\ge \int_0^T -C_{T,r}C(u,f)^2
[t^{\frac12(1-\frac Q2)-1}+t^{\ez-1}]\,dt\nonumber\\
&&\hs-C(u,f)^2\int_0^T\int_{B(x,2t^\az)}|D_yp(t,x_0,y)|^2\,d\mu(x)\,dt\nonumber\\
&&\ge -C_{T,r}C(u,f)^2-C(u,f)^2\int_0^T\int_{B(x,2t^\az)}|D_yp(t,x_0,y)|^2\,d\mu(x)\,dt.\nonumber
\end{eqnarray}

Let us estimate the last term. To this end, let us recall the following
inequality established in \cite[(13)]{krs}. For every $x\in X$,
\begin{eqnarray*}
  &&\int_{T_0}^{T_1}\int_{B(x,R_1)}|D_yp(t,x,y)|^2\,d\mu(y)\,dt\\
  &&\hs \le C\lf[\frac{1}{(R_2-R_1)^2}+\frac{1}{(T_2-T_1)^2}\r]
  \int_{T_0}^{T_2}\int_{B(x,R_2)}p(t,x,y)^2\,d\mu(y)\,dt,
\end{eqnarray*}
whenever $0<R_1<R_2$ and $0\le T_0<T_1<T_2$, where $C$ is a positive constant
independent of $R_1,R_2,T_0,T_1,T_2$ and $x$.
Since $\az\in(\frac13,\frac 12)$, we have that
$t^\az\le T^{1/3}\le r$ and $B(x_0,2t^\az)\subset B(x_0,2r)$.
By these facts and $Q\in [1,2)$, we obtain
\begin{eqnarray*}
&&\int_0^T\int_{B(x,2t^\az)}|D_yp(t,x_0,y)|^2\,d\mu(x)\,dt\le
\int_0^T\int_{B(x,2r)}|D_yp(t,x_0,y)|^2\,d\mu(x)\,dt\\
&&\hs\le C\lf[\frac{1}{r^2}+\frac{1}{T^2}\r]
\int_{0}^{2T}\int_{B(x,3r)}p(t,x,y)^2\,d\mu(y)\,dt\\
&&\hs\le C_{T,r}\int_{0}^{2T}\int_{B(x,3r)} t^{-\frac Q2}p(t,x,y)d\mu(y)\,dt\le C_{T,r}.
\end{eqnarray*}

Thus in both cases, we obtain
\begin{equation*}
\int_0^T \frac{\,d}{\,dt}J(t)\,dt\ge -C_{T,r}(\|u\|_{L^2(B(x_0,4r))}+\|f\|_{L^p(B(x_0,4r))})^2,
\end{equation*}
which completes the proof of Proposition \ref{p3.2}.
\end{proof}

\begin{proof}[Proof of Proposition \ref{p3.3}]
By \eqref{3.x3}, \eqref{2.5} and \eqref{3.4},
we see that
\begin{eqnarray*}
&&\lf|\int_0^t\int_X w(s,x)p(s,x_0,x)\phi(x)Au(x)\,d\mu(x)\,ds\r|\nonumber\\
&&\hs\le CC(u,f)\int_0^t\int_{X}
(d(x,x_0)^\dz+s^{\dz/2})s^{-\frac Q2}e^{-\frac{d(x,x_0)^2}{C_1s}}
\lf|\phi(x)f(x)\r|\,d\mu(x)\,ds\nonumber\\
&&\hs\le CC(u,f)\int_0^t s^{\dz/2}T_{ls}(|\phi f|)(x_0)\,ds,
\end{eqnarray*}
where $l=\frac{C_1}{2C_2}$. By the fact that $T_t-I\to 0$ in the strong
operator topology as $t\to 0$, we obtain
\begin{eqnarray}\label{3.14}
&&\lim_{t\to 0^+}\lf|\frac{1+t^\ez}{t}
\int_0^t\int_X w(s,x)p(s,x_0,x)\phi(x)Au(x)\,d\mu(x)\,ds\r|\nonumber\\
&&\hs\le C_T\lim_{t\to 0^+} \frac 1t\int_0^t s^{\dz/2}T_{ls}(|\phi f|)(x_0)\,ds=
C_T\lim_{s\to 0^+}s^{\dz/2}T_{ls}(|\phi f|)(x_0)=0,
\end{eqnarray}
for almost every $x_0\in \Omega$.

By \eqref{3.14},  we further obtain
\begin{eqnarray*}
\lim_{t\to 0^+}J(t)&&=\lim_{t\to 0^+}\frac{1+t^\ez}{t}
\int_0^t\int_X |Dw(s,x)|^2 p(s,x_0,x)\,d\mu(x)\,ds\\
&&=\lim_{s\to 0^+} T_s(|D(u\phi)|^2)(x_0)=|D(u)(x_0)|^2
\end{eqnarray*}
for almost every $x_0\in \Omega$, proving the Proposition.
\end{proof}

\section{Some applications}
\hskip\parindent In this section, we discuss some sufficient conditions
for \eqref{1.1}. As pointed out in the introduction,
the logarithmic inequality \eqref{1.2} guarantees \eqref{1.1};
see \cite{be2,gro1,gro2} for more about the logarithmic inequality. Moreover,
there is a result about curvature that guarantees \eqref{1.1}.
Let us first recall some notions; see, for instance, \cite{bak1,be2,krs}.

For all $u,v,uv\in D(A)$, define the ``square of the length of the gradient"
pointwise by
$$\Gamma(u,v)(x)=\frac 12[A(uv)(x)-u(x)Av(x)-v(x)Au(x)].$$
Further, assume that there exists a dense subspace $\mathscr{S}\subset N^{1,2}(X)$
such that for all $u,v\in \mathscr{S}$, we can define
$$\Gamma_2(u,v)(x)=\frac 12[A(\Gamma(u,v))(x)-\Gamma(u,Av)(x)-\Gamma(v,Au)(x)].$$

The diffusion semigroup is said to have curvature greater or equal to some
$\kappa\in\rr$, if for all $u\in \mathscr{S}$ and $x\in X$,
\begin{equation}\label{4.1}
\Gamma_2(u,u)\ge \kappa \Gamma(u,u).
\end{equation}
The following result is part of \cite[Proposition 2.1]{bak1}.

\begin{prop}\label{p4.1}
Assume that the subspace $\mathscr{S}$ is as above, and that the diffusion
semigroup has curvature greater or equal to some $\kappa\in\rr$.
Then, for every $u\in N^{1,2}(X)$, each $t>0$ and for almost every $x_0\in X$,
it holds
\begin{eqnarray}\label{4.2}
  \int_X(u(x)-T_tu(x))^2p(t,x_0,x)\,d\mu(x)\le \frac{1-e^{-2\kappa t}}{\kappa}
  \int_X|Du(x)|^2p(t,x_0,x)\,d\mu(x),
\end{eqnarray}
when $\kappa=0$, $\frac{1-e^{-2\kappa t}}{\kappa}$ is replaced by $2t$.
Moreover, if inequality \eqref{4.2} holds true for every
$u\in N^{1,2}(X)$ and almost every $x_0\in X$, then \eqref{4.1} holds true for
all functions in some dense subclass $\mathscr{S}\subset N^{1,2}(X)$ as well.
\end{prop}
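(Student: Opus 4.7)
The plan is to prove the forward direction by the classical Bakry--{É}mery interpolation along the semigroup, and then reverse the argument by differentiating at $t=0$. Interpreting the left-hand side as the variance $T_t(u^2)(x_0)-(T_tu(x_0))^2$ (so that the inner $T_tu(x)$ is read as the constant $T_tu(x_0)$, consistent with \eqref{1.1}), the natural device is the interpolant
\begin{equation*}
F(s):=T_s\bigl((T_{t-s}u)^2\bigr)(x_0),\qquad s\in[0,t],
\end{equation*}
which satisfies $F(0)=(T_tu(x_0))^2$ and $F(t)=T_t(u^2)(x_0)$. First, I would show, using property (6) of the semigroup in Subsection 2.2 together with the product rule $A(vw)=vAw+wAv+2Dv\cdot Dw$ from Lemma \ref{l2.3}, that
\begin{equation*}
F'(s)=T_s\bigl(A((T_{t-s}u)^2)-2T_{t-s}u\cdot AT_{t-s}u\bigr)(x_0)=2\,T_s\bigl(\Gamma(T_{t-s}u,T_{t-s}u)\bigr)(x_0).
\end{equation*}

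Next I would iterate this on the second order quantity $G(s):=T_s(\Gamma(T_{t-s}u,T_{t-s}u))(x_0)$. By the same computation,
\begin{equation*}
G'(s)=2\,T_s\bigl(\Gamma_2(T_{t-s}u,T_{t-s}u)\bigr)(x_0)\ \ge\ 2\kappa\,G(s),
\end{equation*}
where the inequality uses the curvature hypothesis \eqref{4.1} together with positivity-preservation of $T_s$ (property (4)). Hence $s\mapsto e^{-2\kappa s}G(s)$ is non-decreasing, which yields $G(s)\le e^{-2\kappa(t-s)}G(t)=e^{-2\kappa(t-s)}T_t(\Gamma(u,u))(x_0)$. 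Integrating $F'$ over $[0,t]$ gives
\begin{equation*}
F(t)-F(0)=2\int_0^tG(s)\,ds\le 2\,T_t(\Gamma(u,u))(x_0)\int_0^t e^{-2\kappa(t-s)}\,ds=\frac{1-e^{-2\kappa t}}{\kappa}\,T_t(\Gamma(u,u))(x_0),
\end{equation*}
which, since $\Gamma(u,u)=|Du|^2$ by the identity in Lemma \ref{l2.3}, is exactly \eqref{4.2}. The case $\kappa=0$ follows by letting $\kappa\to0$ in the explicit expression or by noting the bound becomes $2t$.

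For the converse, assume \eqref{4.2} for every $u\in N^{1,2}(X)$ and a.e.\ $x_0$. Subtracting $t\,T_t(|Du|^2)(x_0)$ on both sides and dividing by $t^2$, the right-hand side has Taylor expansion $\frac{1-e^{-2\kappa t}}{\kappa}=2t-2\kappa t^2+o(t^2)$, while the left-hand side, for $u\in\mathscr{S}$, can be expanded by writing $T_tu=u+tAu+\tfrac{t^2}{2}A^2u+o(t^2)$ and using property (6) and Lemma \ref{l2.3}; a direct computation identifies the coefficient of $t^2$ as $T_t\Gamma_2(u,u)(x_0)-2\kappa T_t\Gamma(u,u)(x_0)$ up to the desired sign, and letting $t\to0^{+}$ while invoking property (3) gives $\Gamma_2(u,u)\ge\kappa\,\Gamma(u,u)$ pointwise a.e., hence on $\mathscr{S}$, which is the dense subclass in the conclusion.

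The technical obstacles are twofold. First, one must justify the differentiations of $F$ and $G$ for $u\in\mathscr{S}$: this requires that $T_{t-s}u\in\mathscr{D}(A)$ for $s<t$ and that $\Gamma$ and $\Gamma_2$ are well-defined along the semigroup trajectory, which is where the choice of the dense class $\mathscr{S}$ enters; the extension to all $u\in N^{1,2}(X)$ in the forward direction then comes from density plus the $L^2$-contraction of $T_t$ (property (2)) applied to the variance functional. Second, in the asymptotic expansion for the converse, the delicate point is controlling the $o(t^2)$ remainders uniformly in $x_0$ so that one may pass to a pointwise a.e.\ conclusion after taking $t\to 0^{+}$; I would circumvent this by testing against a nonnegative $\varphi\in L^1(X)$ supported in a small ball, dividing by $t^2$, and using the strong continuity of $T_t$ on $L^1_{\mathrm{loc}}$ to recover the pointwise inequality $\Gamma_2(u,u)\ge\kappa\,\Gamma(u,u)$ on a set of full measure.
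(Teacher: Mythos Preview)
The paper does not actually prove Proposition~\ref{p4.1}; it is simply quoted as ``part of \cite[Proposition 2.1]{bak1}'' and used as a black box. Your argument is the classical Bakry--\'Emery semigroup interpolation (differentiate $s\mapsto T_s\bigl((T_{t-s}u)^2\bigr)$ once to produce $\Gamma$, and $s\mapsto T_s\Gamma(T_{t-s}u,T_{t-s}u)$ once more to produce $\Gamma_2$, then integrate back), which is precisely the method of the cited reference; so your approach is correct and coincides with the intended proof. Your explicit remark that the left-hand side of \eqref{4.2} should be read as the variance $T_t(u^2)(x_0)-(T_tu(x_0))^2$ is the right interpretation, consistent with how the paper uses \eqref{4.2} to recover \eqref{1.1}.
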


Since \eqref{4.2} implies \eqref{1.1}, by Proposition \ref{p4.1},
we see that \eqref{1.1} holds when the curvature of the diffusion
semigroup is bounded from below. It is well known that Riemannian manifolds
with Ricci curvature bounded from below satisfies \eqref{4.1},
where the generator $A$ is the Laplace-Beltrami operator;
see, for example, \cite{bak1,chy}.

Another example given in \cite{krs} is the Euclidean spaces with smooth
Ahlfors regular weights. Let $w\in C^2(\rn)$ be an Ahlfors regular weight.
It was shown that if $\frac{1}{w^2}(|\nabla w|^2-w\triangle w)\ge \kappa$,
then for all $u\in C^\fz_0(\rn)$, $\Gamma_2(u,u)\ge \kappa \Gamma(u,u).$
Here, we want to give another explanation which shows
that $\Gamma_2(u,u)\ge 0$ whenever $w$ is a positive $C^2(\rn)$ function.
For every $u\in C_0^\fz(\rn)$, we have $\Gamma(u,u)=|\nabla u|^2$, and
\begin{eqnarray*}
&&\int_\rn \Gamma_2(u,u)(x)w(x)\,dx\\
&&\hs=\int_\rn \frac12[A(|\nabla u|^2)(x)
-2\nabla u(x)\cdot \nabla (Au)(x)]w(x)\,dx\\
&&\hs=-\int_\rn \nabla u(x)\cdot \nabla (\triangle u)(x)w(x)\,dx
-\int_\rn \nabla u(x)\cdot \nabla \lf(\frac{\nabla u \cdot \nabla w}{w}\r)(x)w(x)\,dx\\
&&\hs= \int_\rn (\triangle u(x))^2w(x)\,dx+ \int_\rn
\triangle u(x)\nabla u(x)\cdot \nabla w(x)\,dx\\
&&\hs\hs+ \int_\rn \triangle u(x)\nabla u(x)\cdot \nabla w(x)\,dx+
\int_\rn \frac{|\nabla u(x) \cdot \nabla w(x)|^2}{w(x)}\,dx\\
&&\hs= \int_\rn\lf(\triangle u(x)\sqrt {w(x)}+
\frac{\nabla u(x) \cdot \nabla w(x)}{\sqrt {w(x)}}\r)^2\,dx\ge 0.
\end{eqnarray*}
Thus \eqref{4.1} always holds with $\kappa=0$, whenever
$w\in C^2(\rn)$ is positive. Notice here, the condition
that $w\in C^2(\rn)$ is positive implies that $w$ is a locally
Ahlfors-regular weight.

\subsection*{Acknowledgment}
\hskip\parindent  The author is grateful to his supervisor Professor
Pekka Koskela for posing the problem and many kind suggestions.
He also wishes to express deeply thanks to Kai Rajala, Guo Zhang and
Yuan Zhou for many helpful discussions. Last but not least,
the author would also like to thank the referee for his many valuable remarks
which made this article more readable.

\noindent Renjin Jiang\\
Department of Mathematics and Statistics\\
University of Jyv\"{a}skyl\"{a}\\
P.O. Box 35 (MaD)\\
FI-40014\\
Finland

\medskip

\noindent{\it E-mail address}: \texttt{renjin.r.jiang@jyu.fi}

\begin{thebibliography}{999}

\bibitem{bak1} D. Bakry, On Sobolev and Logarithmic Inequalities for Markov Semigroups, New Trends in
Stochastic Analysis (Charingworth, 1994), World Scientific Publishing, River Edge, NJ, 1997,
pp. 43-75.

\vspace{-0.3cm}
\bibitem{be2} D. Bakry, M. Emery, Diffusions hypercontractives, Seminaire de probabilities,
Vol. XIX, 1983/84, pp. 177-206.

\vspace{-0.3cm}
\bibitem{bm93} M. Biroli, U. Mosco, Sobolev inequalities for
Dirichlet forms on homogeneous spaces.  Boundary value problems
for partial differential equations and applications,
RMA Res. Notes Appl. Math., 29, Masson, Paris, (1993) 305-311.

\vspace{-0.3cm}
\bibitem{bm} M. Biroli, U. Mosco, A Saint-Venant type principle for
Dirichlet forms on discontinuous media, Ann. Mat. Pura Appl. 169 (1995) 125-181.

\vspace{-0.3cm}
\bibitem{ck} L.A. Caffarelli, C.E. Kenig, Gradient estimates for
variable coefficient parabolic equations and singular perturbation
problems, Amer. J. Math. 120 (1998) 391-439.

\vspace{-0.3cm}
\bibitem{ch} J. Cheeger, Differentiability of Lipschitz functions on
metric measure spaces, Geom. Funct. Anal. 9 (1999) 428-517.

\vspace{-0.3cm}
\bibitem{chy} S.Y. Cheng, S.T. Yau, Differential equations on Riemannian
manifolds and their geometric applications, Comm. Pure Appl. Math. 28 (3) (1975) 333-354.

\vspace{-0.3cm}
\bibitem{fot} M. Fukushima, Y. Oshima, M. Takeda, Dirichlet Forms and Symmetric
Markov Processes, in: de Gruyter Studies in Mathematics, Vol. 19,
Walter de Gruyter \& Co., Berlin, 1994.

\vspace{-0.3cm}
\bibitem{gro1} L. Gross, Logarithmic Sobolev inequalities, Amer. J. Math. 97 (1975) 1061-1083.

\vspace{-0.3cm}
\bibitem{gro2} L. Gross, Hypercontractivity over complex manifolds, Acta Math. 182 (1999) 159-206.

\vspace{-0.3cm}
\bibitem{hak95} P. Haj{\l}asz, P. Koskela, Sobolev meets Poincar\'e,
C. R. Acad. Sci. Paris S\'er. I Math. 320 (10) (1995) 1211-1215.

\vspace{-0.3cm}
\bibitem{hak} P. Haj{\l}asz, P. Koskela, Sobolev met Poincar\'e,
Mem. Amer. Math. Soc. 145 (688) (2000).

\vspace{-0.3cm}
\bibitem{hek} J. Heinonen, P. Koskela, Quasiconformal maps in metric spaces
with controlled geometry,  Acta Math.  181  (1998) 1-61.

\vspace{-0.3cm}
\bibitem{kkm} T. Kilpel\"ainen, J. Kinnunen, O. Martio,
Sobolev spaces with zero boundary values on metric spaces,
Potential Anal. 12 (3) (2000) 233-247.

\vspace{-0.3cm}
\bibitem{krs} P. Koskela, K. Rajala, N. Shanmugalingam, Lipschitz
continuity of Cheeger-harmonic functions in metric measure spaces,
J. Funct. Anal. 202 (2003) 147-173.

\vspace{-0.3cm}
\bibitem{sal} L. Saloff-Coste, A note on Poincar\'e, Sobolev,
and Harnack inequalities, Internat. Math. Res. Notices (2) (1992) 27-38.

\vspace{-0.3cm}
\bibitem{sem} S. Semmes, in: M. Gromov (Ed.), Metric Structures
for Riemannian and Non-Riemannian Spaces, Appendix, Progress
in Mathematics, Vol. 152, Birkh\"auser Boston, Inc., Boston, MA, 1999.

\vspace{-0.3cm}
\bibitem{sh} N. Shanmugalingam, Newtonian spaces: an extension of Sobolev
spaces to metric measure spaces,  Rev. Mat. Iberoamericana 16 (2000)
243-279.

\vspace{-0.3cm}
\bibitem{st1} K.T. Sturm, Analysis on local Dirichlet spaces. I. Recurrence,
conservativeness and $L^p$-Liouville properties, J. Reine Angew. Math.
456 (1994) 173-196.

%\vspace{-0.3cm}
%\bibitem{st2} K.T. Sturm, Analysis on local Dirichlet spaces. II. Upper
%Gaussian estimates for the fundamental solutions of parabolic
%equations, Osaka J. Math. 32 (2) (1995) 275-312.

\vspace{-0.3cm}
\bibitem{st3} K.T. Sturm, Analysis on local Dirichlet spaces. III. The
parabolic Harnack inequality, J. Math. Pures Appl. (9) 75 (3) (1996)
273-297.

\end{thebibliography}
\end{document}